\newtheorem{Defi}{Definition}[section]
\newtheorem{thm}{Theorem}[section]
\newtheorem{prop}{Proposition}[section]
\newtheorem{cor}{Corollary}[section]
\newtheorem{rmk}{Remark}[section]
\newtheorem{lma}{Lemma}[section]
\newcommand{\KK}{\mathrm{K}}
\newcommand{\HH}{\mathcal{H}}
\newcommand{\Var}{\textbf{Var}}
\newcommand{\Z}{\mathbb{Z}}
\newcommand{\nn}{\mathbf{n}^\uparrow}
\def\N{{\rm I\kern-0.16em N}}
\def\R{{\rm I\kern-0.16em R}}
\def\E{{\rm I\kern-0.16em E}}
\def\P{{\rm I\kern-0.16em P}}
\def\F{{\rm I\kern-0.16em F}}
\def\B{{\rm I\kern-0.16em B}}
\def\C{{\rm I\kern-0.46em C}}
\def\G{{\rm I\kern-0.50em G}}
\numberwithin{equation}{section}
\font\eka=cmex10
\def\ind{\mathrel{\hbox{\rlap{%
\hbox to 7.5pt{\hrulefill}}\raise6.6pt\hbox{\eka\char'167}}}}
\begin{document}

\title{\textbf{The law of iterated logarithm for subordinated  Gaussian sequences: \\ uniform Wasserstein bounds}}

\author{Ehsan Azmoodeh, Giovanni Peccati and Guillaume Poly}
\maketitle

\abstract We develop a new method for showing that a given sequence of random variables verifies an appropriate law of the iterated logarithm. Our tools involve the use of general estimates on 
multidimensional Wasserstein distances, that are in turn based on recently developed inequalities involving Stein matrices and transport distances. Our main application consists in the proof of the exact 
law of the iterated logarithm for the Hermite variations of a fractional Brownian motion in the critical case.

\

\noindent {\bf Keywords}: Fractional Brownian Motion; Gaussian Sequences; Hermite Variations; Law of the Iterated Logarithm; Stein Matrices; Wasserstein Distances.

\noindent{\bf MSC 2010:} 60G22; 60G15; 60F15; 60F05; 60H07

\tableofcontents

\section{Introduction}

\subsection{Overview}

The aim of the present paper is to develop a new technique for proving laws of the iterated logarithm (LIL) for general sequences of random variables, possibly having the form of partial sums of random 
elements displaying some strong form of dependence. One of the main contributions of our work consists in a collection of sufficient conditions for the LIL to hold, expressed either in terms of uniform 
controls on the (multidimensional) Wasserstein distance between the elements of the sequence and some Gaussian counterpart, or in terms of some underlying collection of {\it Stein matrices} 
(see Definition \ref{d:sm} below). Stein matrices can be roughly described as arrays of random variables verifying a generalised integration by parts formula: they appear naturally when implementing 
the so-called {\it Stein's method} for normal approximations, see \cite{n-pe-book} for an introduction to this topic. One of the key technical tools developed in our work is the new inequality 
\eqref{comparison}, that we believe has a remarkable independent interest, providing an explicit bound on the multidimensional Kolmogorov distance in terms of the 1-Wasserstein distance, where the 
involved constants display a logarithmic dependence in the dimension. In the proof of our main estimates, we shall often make use of the recent findings from \cite{HSI, Entropy}, where a new connection 
between Stein matrices and information functionals has been revealed, thus yielding new bounds on transport distances. 

In what follows, every random element is defined on a common probability space $(\Omega, \mathcal{F}, \P)$.

\subsection{Motivation: fractional Hermite variations in the critical regime}

Let $B^H = \{B_t : t\in \R\}$ be a standard fractional Brownian motion on the real line with Hurst parameter $H\in (0,1)$, that is: $B^H$ is a centered Gaussian process having covariance 
$\E[B_s^HB_t^H] = 2^{-1}[ |t|^{2H}+|s|^{2H} - |s-t|^{2H}]$. Write $Z^H_k := B^H_{k+1}- B^H_k$, $k\in \Z$, and denote by $\{H_q : q=0,1,...\}$ the usual collection of Hermite polynomials 
(so that $H_0=1$, $H_1(x) = x$, $H_2(x) = x^2-1$, and so on; see e.g. \cite[Section 1.4]{n-pe-book}). We are interested in the asymptotic behavior of the so-called {\it Hermite variations} of $B^H$, that is, 
we want to study random sequences of the type $n \mapsto V_n:= \sum_{k=1}^n H_q(Z^H_k)$, as $n\to\infty$, for fixed values of $q$ and of the Hurst index $H$.
It is a well-known fact that the fluctuations of such variations heavily depend on the relation between $q$ and $H$, a crucial role being played by the so-called `critical regime', corresponding to the 
choice of parameters $H = 1-\frac{1}{2q}$. The following convergence results, involving two well-known Central Limit Theorems (CLTs), are classical:
\begin{itemize}
\item[(a)] ({\it Breuer-Major CLT}, see e.g. \cite[Chapter 6]{n-pe-book}) If $H\in (0, 1-1/2q)$, then there exists a finite constant $\sigma_q>0$, such that the sequence $n^{-1/2}V_n$ converges in 
distribution to a centered Gaussian random variable with variance $\sigma_q^2$.

\item[(b)] ({\it Non-central convergence}, see \cite{DM79, Taqqu3}) If $H> 1-1/2q$, then the sequence $n^{q(1-H)-\frac12}V_n$ converges in distribution to a non-Gaussian random variable, having a so-called `Hermite distribution'.

\item[(c)] ({\it CLT in the critical regime}, see e.g. \cite{GS85}) If $H= 1-1/2q$, then, for some appropriate constant $\sigma_q>0$,  $(\log n)^{-1/2}V_n$ converges in distribution to a centered Gaussian random variable with variance $\sigma_q^2$.

\end{itemize}

The reader is referred to \cite[Section 7.4]{n-pe-book} for a unified modern presentation of these phenomena. The following question is therefore natural: {\it can one associate an exact law of the iterated logarithm (LIL) to each one of the convergence results described at Points {\rm(a)}, {\rm(b)} and {\rm(c)}}? It turns out that, although an appropriate LIL has been shown in the two cases (a) and (b) (see the discussion below), none of the available techniques can be used to deal with the critical case (c). It will be demonstrated that our new approach exactly allows to fill this { fundamental} gap. 

We will now provide a discussion of the available results concerning LILs for subordinated Gaussian sequences.

\medskip

\noindent{\it Case {\rm (a)}}. Let $Z = \{Z_k : k\in \Z\}$ be a centered stationary Gaussian sequence, and let $f$ be a measurable and square-integrable mapping. Since the seminal results by Breuer and Major (see \cite{BM83}, as well as \cite[Chapter 7]{n-pe-book}), many authors tried to deduce criteria on $f$ and $Z$ ensuring that, for some adequate finite constant $\sigma>0$,
\begin{equation}\label{e:haden}
\limsup_{n\to\infty} \frac{1}{\sqrt{2n\log\log n}}\sum_{k=1}^n f(Z_k)=\sigma,
\end{equation}
with probability one. Relying on a seminal paper of Lai and Stout \cite{LS1980} which provides conditions for the upper-bound of the iterated logarithm for general partial sums of dependent random variables, and by using systematically the so-called `method of moments', Arcones \cite{arcones} and Ho \cite{Ho} obtained LILs for non-linear functionals of general Gaussian fields. First, Ho \cite{Ho} { has provided criteria ensuring that the right-hand side of \eqref{e:haden} is bounded from above by some finite constant $\sigma$}, by expressing the conditions of Lai and Stout in terms of the covariance of $Z$ and the coefficients of the Hermite expansion of $f$. Next, Arcones \cite{arcones} has extended the results of Ho, in particular by  obtaining exact lower bounds. The key idea developed by Arcones in order to obtain lower bounds, is to consider Gaussian stationary sequences of the form
\begin{equation}\label{Repfbm}
G_k=\sum_{n=\infty}^{\infty} a_{n+k} N_n,
\end{equation}
and next to use the classical law of the iterated logarithm for locally dependent sequences by a truncation argument. It turns out that some of the results by Arcones contain the exact law of LIL associated with the CLT at Point (a).  Indeed, whereas it is not obvious at first glance, one can represent the increments of the $B^H$ in the form (\ref{Repfbm}) (see for instance \cite{NuTyndel}). Besides, the coefficients in the expansion (\ref{Repfbm}) are such that $a_k\sim\frac{1}{k^{\frac 3 2 -H}}$ (see \cite[prop. 2.2, p.64]{Beran}). Plugging these facts in \cite[Proposition 1]{arcones}, one deduces immediately that, if $H<1-\frac{1}{2q}$, then
$$\limsup_{n\to\infty}\frac{1}{\sqrt{2n\log\log(n)}}\sum_{k=1}^n H_q(B_{k+1}^H-B_k^H)=\sigma_q>0,$$
with probability one.

\medskip

\noindent{\it Case {\rm (b)}}.  The question of the iterated logarithm in this setup was partially solved by Taqqu in \cite{Taqqu}. Later on, Lai and Stout \cite{LS1980} gave criteria for upper bounds, whereas the complete law of the iterated logarithm was proved by Mori and Oodaira in \cite{Moda}. 

\medskip

\noindent{\it Case {\rm (c)}}. The first LIL ever proved for the critical regime (c) will appear in Theorem \ref{exam-critical} below: the proof is based on the novel approach developed in the present work. Note that, so far, there has been no attempt to prove a LIL in this delicate context. We believe indeed that it would be not possible (or, at least, technically very demanding) to extend the approaches by Arcones \cite{arcones} and Mori and Oodaira in \cite{Moda} to deal with this case. One plausible explanation for this impasse is that, in both cases (a) and (b), the convergence in distribution takes place at an algebraic speed in $n$ (with respect e.g. to the Kolmogorov distance, see e.g. \cite[p. 146]{n-pe-book}). However, it is known since \cite[p. 381]{Bi11} that the speed of convergence is logarithmic in the critical regime (c), and such a rate is sharp. A careful analysis  of the proofs of Mori Oodaira and Arcones reveals that most arguments in their approach are based on `polynomial' estimates in the truncations, derived from upper bounds on moment sequences: as they are, such estimates are of no { use} for dealing with a logarithmic speed of convergence. In contrast, our approach allows one to obtain a simple and transparent proof of the LIL stated in Theorem \ref{exam-critical}, thus by-passing at once the difficulties mentioned above.

\subsection{Stationary Gaussian sequences}
 
 As a by-product of our analysis, in Theorem Theorem \ref{ILLGaussian} we shall obtain a very general LIL for a stationary Gaussian sequence $Z$. To our knowledge, the most general LIL for a stationary Gaussian sequence is due again to Arcones \cite{arconesGaussian}. In such a reference, the author shows that the LIL holds under the condition that $\sum_{k} |\rho(k)|<\infty$, where $\rho$ is the correlation function of $Z$ (this covers the result of Deo \cite{DEO74}). Other conditions were given in \cite{LS1978,Taqqu} which are similar to the condition we provide in the Theorem \ref{ILLGaussian}, in the sense that it is required that the variance of the sequence of partial sums is asymptotically equivalent to a sequence of the type $n^\alpha L(n)$, where $L$ is a regularly varying function. We stress that there is an important difference between our work and some of the existing literature, namely: we do not need any further assumptions on the function $L$, whereas both references \cite{LS1978,Taqqu} need some additional technical requirements on $L$. Finally, we stress that our condition covers the findings of \cite{arconesGaussian}, see Corollary \ref{CoroGaussian} below. Our findings support the conjecture that the law of the iterated logarithm in this setting holds under \textit{the only} assumption that the variance is regularly varying (meaning that Assumption \ref{covarianceassup} below can be dropped). 

\subsection{Remark on notation}

Throughout the paper, we shall use standard notations from Malliavin calculus -- the reader is referred to \cite[Chapters 1 and 2]{n-pe-book} for a standard introduction to this topic. In particular, given an isonormal Gaussian process $G=\{G(h) : h\in \mathcal{H}\}$ over some real separable Hilbert space $\mathcal{H}$, we shall denote by $D$ and $\delta$, respectively, the {\it Malliavin derivative} and {\it divergence} operators. Also, we shall write $L$ to indicate the {\it generator} of the associated Ornstein-Uhlenbeck semigroup. We recall that a square-integrable functional $F$ of $G$ is said to belong to the {\it $q$th Wiener chaos} associated with $G$ (for $q=0,1,2,...$) if $LF = -qF$. { We also recall, for future use, the following crucial 
{\it hypercontractivity property} of Wiener chaoses (see e.g. \cite[Corollary 2.8.14]{n-pe-book} for a proof): if $F$ is an element of the $q$th Wiener chaos of a given Gaussian field, then, for every $r>p>1$,
\begin{equation}\label{e:hc}
\E[|F|^r]^{1/r} \leq \left(\frac{r-1}{p-1}\right)^{q/2} \!\!\times\,\, \E[|F|^p]^{1/p}.
\end{equation}
}
\subsection{Plan}

The paper is organized as follows. Section 2 contains the statements of our main results. Section 3 is devoted to some preliminary material, whereas Section 4 and 5 contain the proofs, respectively, of our theoretical results and of our findings connected to applications.

\section{Statement of the main results}

Throughout the present section, we will consider a sequence $$X= \{X_n: n\geq 0\}$$ of real-valued random variables that are defined on a common probability space $(\Omega,\mathcal{F},\P)$. We make the convention that $X_0=0$, and we assume that the elements of the sequence $X$ are 
centered, i.e., that $\E[X_n]=0$ for all $n\ge1$. In general, the capital letter $C$ stands for a general
constant which may vary from line to line; its dependency on other parameters at hand will be emphasized whenever it is important. 

\medskip

Given two random elements ${\bf Z}, {\bf Y}$ with values in $\R^d$ ($d\geq 1$), the {\em Kolmogorov distance} between the laws of ${\bf Z}$ and ${\bf Y}$, denoted $d_{\rm K}({\bf Z}, {\bf Y})$, is defined as follows:
$$
d_{\rm K}({\bf Z}, {\bf Y}) = \sup\Big| \P[{\bf Z} \in Q] - \P[{\bf Y} \in Q] \Big|,
$$
where the supremum runs over all rectangles of the form $Q = (-\infty , a_1]\times \cdots \times (-\infty , a_d]$, with $a_1,...,a_d\in \R$.
\medskip

Fix $\theta\geq 1$. Given two random elements ${\bf Z}, {\bf Y}$ with values in $\R^d$ ($d\geq 1$) and such that $\E\|{\bf Z}\|_{\R^d}^\theta, \, \E\|{\bf Y}\|_{\R^d}^\theta<\infty$, the {\em $\theta$-Wasserstein distance} $W_\theta({\bf Z}, {\bf Y})$ between the laws of ${\bf Z}$ and ${\bf Y}$ is given by 
$$
W_\theta({\bf Z}, {\bf Y}) := \inf\left \{  (\E[\|{\bf U}-{\bf V} \|_{\R^d}^\theta])^{1/\theta}\right\},
$$
where the infimum runs over all $2d$-dimensional vectors $({\bf U}, {\bf V})$ such that ${\bf U}\stackrel{law}{=}{\bf  Z}$ and ${\bf V}\stackrel{law}{=} {\bf Y}$. The value of the dimensional parameter $d$, which does not appear in the notation $W_\theta({\bf Z}, {\bf Y})$, will be always clear from the context.

Given two sequences of real numbers $(u_n)_{n\ge 1}$ and $(v_n)_{n\ge 1}$, the notation $u_n\sim v_n$ means that
$$\lim_{n\to\infty} \frac{u_n}{v_n}=1.$$

\subsection{A general law of the iterated logarithm}\label{ss:genLIL}

We shall now introduce a collection of assumptions, that will enter the statements of our main results.

\begin{itemize}
\item[\textbf{(A1)}] The sequence $X$ verifies Assumption {\bf (A1)} if there exists a slowly varying function $L$ ({ that is, $L$ is such that $\lim_{x\to\infty} L(ax)/L(x)=1$ for every $a>0$ --- see e.g. \cite[p. 14]{Bingbook}}) and a function $g:\mathbb{N} \to \R_+$ such that for some $a\in]0,1]$

$$g(n)\sim n^a L(n),$$
and for some $C>0$ and all $n_1<n_2$
\begin{equation}\label{Variance_r}
\left|\E\Big[  \frac{ X_{n_2}-X_{n_1}}{g(n_2-n_1)} \Big]^2-1\right|\le \frac{C}{1+\log(n_2-n_1)}.
\end{equation}

\item[\bf (A2)] We shall say the $X$ verifies Assumption {\bf (A2)} if, for every pair of integers $n,p\geq 1$,
$$
\limsup_{a\to \infty} \E[(X_{a+n} -X_a)^{2p}] <\infty. 
$$
\item[\bf (A3)]  Let $G$ be a one-dimensional standard Gaussian random variable, let $X$ verify assumption {\bf (A1)}, and let $g : \R_+\to \R_+$ be the corresponding function. We say that $X$ verifies Assumption {\bf (A3)} if there exist constants $C,\lambda>0$ such that, for all $\theta \geq 1$, 
$$
W_{\theta}\left(\frac{X_{n_2}-X_{n_1}}{g(n_2-n_1)}, G\right) \leq C\alpha(\theta) \frac{\theta^\lambda}{1+\log(n_2-n_1)},
$$
where $\alpha(1) = 1$ and $\alpha(\theta) = (\theta-1)^\frac12$ for $\theta>1$, and moreover
$$
d_\KK\left(\frac{X_{n_2}-X_{n_1}}{g(n_2-n_1)}, G\right)\leq \frac{C}{1+\log(n_2-n_1)}
$$
for every $n_2>n_1$.

\item[\bf (A4)]  

%For each $d\ge 1$ and each increasing collection of positive integers $\nn=\{n_i\}_{1\le i \le 2 d}$ of $2d$ integers, we write for simplicity
%\begin{eqnarray}\label{e:freccia}
%\mathbf{Y}_{\nn}&=&(Y_{1},\cdots,Y_{d})\\ &=&\left(\frac{X_{n_2}-X_{n_1}}{g(n_2-n_1)},\frac{X_{n_4}-X_{n_3}}{g(n_4-n_3)},\cdots,\frac{X_{n_{2d}}-X_{n_{2d-1}}}{g(n_{2d}-n_{2d-1})}\right).\notag
%\end{eqnarray}
Given real numbers $q>1$, $\alpha>0$, and  integers $d,m\ge 1$, we consider the particular collection of positive integers (with $[x]$ the integer part of the real number $x$)
$$\nn:=\{n_i\}_{1 \le i \le 2d} =\big\{[q^{{(m+i)}^{1+\alpha}}]\big\}_{1 \le i\le 2d}.$$
For simplicity we write
\begin{eqnarray}\label{e:freccia}
\mathbf{Y}_{\nn}&=&(Y_{1},\cdots,Y_{d})\\ &=&\left(\frac{X_{n_2}-X_{n_1}}{g(n_2-n_1)},\frac{X_{n_4}-X_{n_3}}{g(n_4-n_3)},\cdots,\frac{X_{n_{2d}}-X_{n_{2d-1}}}{g(n_{2d}-n_{2d-1})}\right),\notag
\end{eqnarray}
the random vector or size $d$ of increments of X along the subsequence $\nn$. We say that the sequence $X$ verifies Assumption {\bf (A4)} if, { for some fixed $q>1$ and every $\alpha>0$, there exists some constant $C_{\alpha,q}$ such that, for every $d,m\ge 1$, }
\begin{equation}\label{e:multiwass}
W_1(\mathbf{Y}_{\nn}, {\bf G}) \leq \frac{d \, C_{\alpha,q}}{1+\log(n_{2}-n_{1})},
\end{equation} 
where ${\bf G}$  stands for a $d$-dimensional vector of i.i.d. centered standard Gaussian random variables.

%holds for every $p\geq 1$, where we have adopted the notation \eqref{e:freccia}, and every increasing collection of natural numbers with the form $\nn:=\{n_i\}_{1 \le i \le 2p} =\big\{[q^{{(m+i)}^{1+\alpha}}]\big\}_{1 \le i\le 2p}$ for some $m\geq 1$ (with $[x]$ the integer part of the real number $x$), and ${\bf G}$  is a $p$-dimensional vector of i.i.d. centered standard Gaussian random variables. 

%
%   For any integer $i\in\{1,\cdots,p\}$, any $q>1$ and any $\alpha\ge 0$, we set $\nn=\{n_i\}_{1 \le i \le p} =\big\{[q^{{i}^{1+\alpha}}]\big\}_{1 \le i\le p}$, where $[x]$ stands for the integer part of the real number $x$. 
% We shall say that the sequence $\{X_n\}_{n\ge1}$ satisfies in the assumption $(\textbf{C})(\alpha,q)$, if the next inequalities hold for some constant $C_{\alpha,q}$.
%
% 
 
% 
%\begin{equation}\label{varGamma1}
%\sqrt{\Var\Big[\tau_{i,i}(\mathbf{Y}_{\nn})\Big]}\le\frac{C_{\alpha,q}}{1+\log(n_{2i}-n_{2i-1})}.
%\end{equation}
%Furthermore, we assume that for $i<j \in \{1,\cdots,p\}$,
%\begin{equation}\label{varcrossGamma1}
%\sqrt{\E\Big[\tau_{i,j}(\mathbf{Y}_{\nn})^2\Big]}\le \frac{C_{\alpha,q}}{1+\log(n_{2i}-n_{2i-1})}.
%\end{equation}

\end{itemize}
\begin{rmk}{\rm
Roughly speaking, assumption (\textbf{A4}) expresses the fact that the normalized increments of X taken at the particular scale $q^{i^{1+\alpha}}$,  behave as independent Gaussian. Moreover, the error in this approximation for the Wasserstein distance is logarithmic in the size of the smallest increment $(n_2-n_1)$.}
\end{rmk}

The next statement is one of the main achievements of the present paper.

\begin{thm}\label{mainthm0}
Assume that the sequence $\{X_n : n\geq 1\}$ satisfies the four assumptions {\rm (\textbf{A1})--(\textbf{A4})}. Then,

\begin{equation}
 \limsup_{n\to\infty} \frac{X_n}{\sqrt{2  g^2(n) \log \log n}}= 1, \quad \text{a.s.}
\end{equation}

\begin{equation}
\liminf_{n\to\infty} \frac{X_n}{\sqrt{2 g^2(n) \log \log n}}= -1, \quad \text{a.s.}
\end{equation}
where the mapping $g$ appears in Assumption {\rm (\textbf{A1})}.

\end{thm}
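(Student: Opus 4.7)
The plan is to establish both bounds by the classical subsequence method, tuned to the sparse skeleton $n_k=[q^{k^{1+\alpha}}]$ built into Assumption (A4), with the liminf statement following by applying the same arguments to $-X$, which inherits Assumptions (A1)--(A4) thanks to the symmetry of the Gaussian target and the invariance of the Wasserstein and Kolmogorov distances under $x\mapsto -x$.

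For the upper bound $\limsup_n X_n/\sqrt{2g^2(n)\log\log n}\leq 1$, I fix $\epsilon>0$ small and work along $n_k=[q^{k^{1+\alpha}}]$. Assumption (A3) gives $d_{\rm K}(X_{n_k}/g(n_k),G)\leq C/\log n_k$, so combining with the standard Gaussian tail at level $(1+\epsilon)\sqrt{2\log\log n_k}$ yields
\begin{equation*}
\P\bigl(X_{n_k}>(1+\epsilon)\sqrt{2g^2(n_k)\log\log n_k}\bigr)\leq \frac{C}{k^{(1+\alpha)(1+\epsilon)^2}\sqrt{\log k}}+\frac{C}{k^{1+\alpha}},
\end{equation*}
which is summable in $k$. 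The first Borel--Cantelli lemma then delivers the upper bound along the skeleton $(n_k)$, and to fill the gaps I would bound $\max_{n_{k-1}<n\leq n_k}|X_n-X_{n_{k-1}}|$ in $L^{2p}$ by combining the uniform moment control in (A2) with the hypercontractivity estimate \eqref{e:hc} (so that control of one moment forces control of all moments) and a standard maximal inequality. The super-geometric growth of $(n_k)$ forces these maxima to be $o(g(n_k)\sqrt{\log\log n_k})$, and sending $\epsilon\to 0$ along a countable set concludes.

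For the lower bound $\limsup_n X_n/\sqrt{2g^2(n)\log\log n}\geq 1$, the key is the asymptotic independence in (A4). For fixed $q>1$, $\alpha>0$ chosen small enough with respect to $\epsilon>0$, and $m\geq 1$, I consider the increment vector $\mathbf{Y}_\nn$ of \eqref{e:freccia}: (A4) gives $W_1(\mathbf{Y}_\nn,\mathbf{G})\leq dC_{\alpha,q}/\log n_1$, and the multidimensional comparison inequality \eqref{comparison} announced in the introduction upgrades this to a Kolmogorov-type rectangle bound that is only polylogarithmic in $d$. For a genuine i.i.d.\ Gaussian vector $\mathbf{G}$, the events $E_i=\{G_i>(1-\epsilon)\sqrt{2\log\log n_{2i}}\}$ are independent with $\P(E_i)\asymp i^{-(1+\alpha)(1-\epsilon)^2}/\sqrt{\log i}$, which sum to $+\infty$ once $\alpha$ is small enough; hence $\P(\bigcup_{1\leq i\leq d}E_i)\to 1$ as $d\to\infty$. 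Applying the comparison inequality to the rectangle $\bigcap_i\{Y_i\leq (1-\epsilon)\sqrt{2\log\log n_{2i}}\}$ and choosing $d=d(m)\to\infty$ with $d(\log d)^\kappa=o(\log n_1)$ transfers this to $\mathbf{Y}_\nn$, so that $\P(\exists i\leq d(m):Y_i>(1-\epsilon)\sqrt{2\log\log n_{2i}})\to 1$ as $m\to\infty$. Letting $m$ run through disjoint blocks and applying Borel--Cantelli across the blocks yields that almost surely infinitely many $i$ satisfy the desired inequality. To upgrade this bound on the increment to a bound on $X_{n_{2i}}$, I write $X_{n_{2i}}=Y_i\, g(n_{2i}-n_{2i-1})+X_{n_{2i-1}}$; since $n_{2i-1}/n_{2i}\to 0$ and $g$ is regularly varying, $g(n_{2i}-n_{2i-1})/g(n_{2i})\to 1$, while the already-proved upper bound applied to $X_{n_{2i-1}}$ forces $|X_{n_{2i-1}}|=o(g(n_{2i})\sqrt{\log\log n_{2i}})$. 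Sending $\epsilon\to 0$ concludes.

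The main obstacle is the lower bound: Assumption (A4) delivers only approximate independence, so the classical second Borel--Cantelli lemma cannot be invoked directly. The substitute is precisely the new comparison inequality \eqref{comparison}, whose logarithmic dependence in the dimension is essential, since one must take $d$ large, against the slow logarithmic decay of the Wasserstein error, in order to drive the tail divergence that fuels the Borel--Cantelli step. The upper-bound gap control is a secondary technical point, comfortably handled by the super-geometric sparsity of the skeleton together with hypercontractivity \eqref{e:hc} and the moment bounds (A2).
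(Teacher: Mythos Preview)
Your lower-bound strategy is essentially the paper's: compare the increment vector $\mathbf{Y}_{\nn}$ with an i.i.d.\ Gaussian block via the comparison inequality \eqref{comparison}, choose the block length $d$ to grow like a power of the starting index so that the Gaussian tail probabilities diverge while the Kolmogorov error still vanishes, conclude that the events $A_i$ occur infinitely often, and then absorb $X_{n_{2i-1}}$ using regular variation of $g$ together with the already-established upper bound. (One nitpick: no ``Borel--Cantelli across blocks'' is needed; the paper simply observes $\P(\bigcap_{i\ge k}A_i^c)\le\P(\bigcap_{i=p}^{p+d-1}A_i^c)\to 0$, which is pure monotonicity.)

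The upper bound, however, has a genuine gap. First, you invoke the hypercontractivity estimate \eqref{e:hc}, but that inequality is stated for elements of a fixed Wiener chaos, whereas Theorem \ref{mainthm0} is formulated for an \emph{abstract} sequence satisfying {\bf (A1)}--{\bf (A4)}; nothing in those assumptions places $X_n$ in a chaos. Second, and more seriously, the super-geometric skeleton $n_k=[q^{k^{1+\alpha}}]$ is the wrong tool for the upper bound: since $n_{k-1}/n_k\to 0$, one has $g(n_k-n_{k-1})\sim g(n_k)$, so a single gap increment $X_{n_k}-X_{n_{k-1}}$ already has standard deviation comparable to $g(n_k)$. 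The maximum over the gap therefore cannot be $o\big(g(n_k)\sqrt{\log\log n_k}\big)$ as you claim; it is of the same order as the quantity you are trying to control, and no ``standard maximal inequality'' will rescue this. Sparse skeletons are used in LIL proofs to manufacture near-independence for the \emph{lower} bound; upper bounds require either a dense skeleton together with a maximal inequality tailored to the dependence structure, or a device that bypasses the maximal step altogether.

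The paper takes the second route: it invokes the Lai--Stout lemma (Lemma \ref{l:laistout}), whose hypotheses (a) and (b) are uniform tail bounds on the \emph{increments} $(X_{n+a}-X_a)/g(n)$ over all starting points $a$. These are verified using the \emph{full} strength of {\bf (A3)}, namely the $W_\theta$ bound with $\theta=2p$ and $2p=\log\log n$: one couples $(X_{n+a}-X_a)/g(n)$ to a standard Gaussian with $L^{2p}$-error $\le (2p-1)^{p}\big(C(2p)^\lambda/(1+\log n)\big)^{2p}$, and the choice $2p=\log\log n$ makes this error beat the Gaussian tail $(\log n)^{-(1+\epsilon/2)^2}$. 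Your plan uses only the Kolmogorov part of {\bf (A3)}, which is not enough; the $\theta$-dependent Wasserstein control is what drives the upper bound.
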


\begin{rmk}{\rm
As demonstrated below, the Assumption \textbf{(A3)} expresses a sort of hypercontractivity. However, an inspection of the proof of the Theorem \ref{mainthm0} reveals that the mere Assumptions \textbf{(A1)}, \textbf{(A2)} and \textbf{(A4)} are enough to ensure that
$$\limsup_{n\to\infty}\frac{X_{k_n}}{g(k_n)\sqrt{2\log\log(k_n)}}=1\,\,\,\,\text{a.s.},$$
where $k_n=q^{n^{1+\alpha}}$.}
\end{rmk}

\subsection{Checking the assumptions by means of Stein matrices}
We will now show how one can check the validity of Assumptions {\bf (A2)--(A4)} of the previous section by using the concept of a {\it Stein matrix} associated with a given random vector. As discussed below, such a notion is particularly well adapted for dealing with the normal approximation of functionals of general Gaussian fields.

\begin{Defi}[Stein matrices]\label{d:sm} {\rm Fix $d\geq 1$, let ${\bf F} = (F_1,...,F_d)$ be a $d$-dimensional centered random vector, and denote by $M(d,\R)$ the space of $d\times d$ real matrices. We say that the matrix-valued mapping
$$
\tau : \R^d \to M(d, \R) : {\bf x} \mapsto \tau({\bf x}) = \{ \tau_{i,j}({ \bf x}) : i,j=1,...,d \}
$$
is a {\it Stein matrix} for ${\bf F}$  if $\tau_{i,j} (F)\in L^1(\P)$ for
every $i,j$ and the following is verified: for every
differentiable function $g : \R^d \to \R$ such that $g$ and its partial derivatives have at most polynomial growth, the two (vector-valued) expectations $\E \left[ {\bf F} g({\bf F})  \right]$ and $\E \left[\tau({\bf F})  \nabla g({\bf F})  \right]$ are well defined and 
\begin{equation}
\label{eq:26}
\E \left[ {\bf F} g({\bf F})  \right] = \E \left[\tau({\bf F})  \nabla g({\bf F})  \right],
\end{equation}
or, equivalently,
\begin{equation}
\label{eq:26bis}
\E \left[ F_i g({\bf F}) \right] = \sum_{j=1}^d \E \left[ \tau_{i,j}({\bf F}) \partial_jg({\bf F}) \right], \quad i=1,...,d.
\end{equation}
Note that, selecting $g(x) = x_j$, $j=1,...,d$, one obtains from \eqref{eq:26} that $\E[F_iF_j] = \E[\tau_{i,j }(F)]= \E[\tau_{j,i }(F)]$, for every $i,j=1,...,d$. Finally, we stress that, in dimension $d=1$ the Stein matrix $\tau$ is simply a real-valued mapping, which is customarily called a {\it Stein factor}.}
\end{Defi}

The next statement provides an explicit connection between properties of Stein matrices and the law of the iterated logarithm stated in the previous section.

\begin{prop}\label{p:checksm} Let $X= \{X_n : n\geq 1\}$ be the sequence of centered random variables introduced in the previous section. Assume that $X$ verifies Assumption {\bf (A1)} (for some adequate mapping $g$), and also that the following properties hold: 
\begin{itemize}
\item[\rm (i)] For each $d\ge 1$, and each increasing sequence $\nn=\{n_i\}_{1\le i \le 2 d}$ of $2d$ integers, the vector $\mathbf{Y}_{\nn}=(Y_{1},\cdots,Y_{d})$, as defined in \eqref{e:freccia}, admits a $d\times d $ Stein matrix $\boldsymbol{\tau}_{\nn}=\{\tau_{i,j} : i,j=1,...,d\}$, in the sense of Definition \ref{d:sm}.

\item[\rm (ii)] There exists $q>1$ such that, for every $\alpha>0$, there exists a constant $C_{\alpha,q}>0$ verifying the inequalities
\begin{equation}\label{varGamma1}
\sqrt{\Var\Big[\tau_{i,i}(\mathbf{Y}_{\nn})\Big]}\le\frac{C_{\alpha,q}}{1+\log(n_{2i}-n_{2i-1})},\,\forall i=1,...,d,
\end{equation}
and {
\begin{equation}\label{varcrossGamma1}
A(i,j)\le \frac{C_{\alpha,q}}{1+\log(n_{2i}-n_{2i-1})}, \,\forall 1\leq i<j\leq d,
\end{equation}
where $$A(i,j) = \max\left\{ \sqrt{\E\Big[\tau_{i,j}(\mathbf{Y}_{\nn})^2\Big]} , \sqrt{\E\Big[\tau_{j,i}(\mathbf{Y}_{\nn})^2\Big]}\right\},$$ }
for every $d\geq 1$ and every increasing collection of integers of the type $\nn=\{n_i\}_{1 \le i \le 2d} =\big\{[q^{{(m+i)}^{1+\alpha}}]\big\}_{1 \le i\le 2d}$, where $m\geq 1$. Here, we have adopted the notation \eqref{e:freccia}, whereas ${\bf \tau}_{\nn} = \{\tau_{i,j}\}$ is the Stein matrix associated with $\mathbf{Y}_{\nn}$. 

\item[\rm (iii)] There exist constants $C,\lambda >0$ such that, for all $\theta\geq 1$,

\begin{eqnarray}\notag
\Big\Vert  \tau\left(\frac{X_{n_2}-X_{n_1}}{g(n_2-n_1)}\right) -1 \Big \Vert_{\theta}&:=& \Big( \E \big\vert \tau\left(\frac{X_{n_2}-X_{n_1}}{g(n_2-n_1)}\right) -1 \big\vert^\theta \Big)^{\frac{1}{\theta}}\\
& \le& C \frac{\theta^\lambda}{1+\log {(n_2-n_1)}}, \label{thetahypercontract}
\end{eqnarray}
for every $n_1<n_2$, where $\tau$ stands for the Stein factor of $\frac{X_{n_2}-X_{n_1}}{g(n_2-n_1)}$.
\end{itemize}
Then, $X$ verifies assumptions {\rm {\bf (A2)}, {\bf (A3)}} and {\rm {\bf (A4)}}.
\end{prop}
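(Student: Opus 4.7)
The plan is to verify the three assumptions (\textbf{A2}), (\textbf{A3}), (\textbf{A4}) in succession, in each case relying on the Stein identities \eqref{eq:26}--\eqref{eq:26bis} supplied by hypotheses (i) and (iii), together with classical univariate Stein-Malliavin estimates and the hypercontractive/entropic Stein inequalities of \cite{HSI, Entropy}. Assumption (\textbf{A2}) is the quickest: applying the univariate Stein identity with $g(x)=x^{2p-1}$ to the normalised increment $F_n=(X_{n_2}-X_{n_1})/g(n_2-n_1)$ yields the moment recursion $\E[F_n^{2p}] = (2p-1)\,\E[\tau(F_n)\,F_n^{2p-2}]$. Hölder's inequality with conjugate exponents $(p,p/(p-1))$ then gives $\E[F_n^{2p}] \le (2p-1)^p\,\|\tau(F_n)\|_p^p$, and (iii) bounds $\|\tau(F_n)\|_p \le 1 + C p^{\lambda}/(1+\log(n_2-n_1))$ uniformly in the scale, yielding the required uniform $L^{2p}$ control. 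A brief bootstrap starting from the $L^2$ bound afforded by \eqref{Variance_r} justifies the finiteness of moments at each stage of this argument.

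For (\textbf{A3}), the one-dimensional estimates are standard consequences of (iii). The Kolmogorov bound follows from $d_{\KK}(F_n, G) \le d_{\text{TV}}(F_n, G) \le 2\,\|1-\tau(F_n)\|_1$, combined with (iii) at $\theta=1$; similarly, the Wasserstein-$1$ estimate $W_1(F_n, G)\le C\,\|1-\tau(F_n)\|_1$ follows from the Stein-equation representation of Lipschitz test functions, together with the standard regularity bound on the Stein solution. For $\theta>1$, I would invoke the hypercontractive Stein inequality of \cite{HSI, Entropy} (a byproduct of the entropy-energy bound and Talagrand's $T_2$ transport inequality), which delivers an estimate of the form $W_\theta(F_n, G)\le C\sqrt{\theta-1}\,\|1-\tau(F_n)\|_\theta$; plugging in (iii) yields the prescribed $\alpha(\theta)\,\theta^\lambda/(1+\log(n_2-n_1))$ rate.

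The core of the argument is (\textbf{A4}), which requires a genuinely multivariate Stein bound. I would invoke the inequality, extractable from \cite{HSI, Entropy}, that for any centred random vector $\mathbf{F}$ possessing a Stein matrix $\boldsymbol{\tau}$,
$$
W_1(\mathbf{F}, \mathbf{G}) \;\le\; C\,\sqrt{\E\bigl[\|\boldsymbol{\tau}(\mathbf{F}) - I_d\|_{\text{HS}}^2\bigr]},
$$
with a universal constant independent of $d$, where $I_d$ denotes the identity and $\|\cdot\|_{\text{HS}}$ the Hilbert-Schmidt norm. Expanding,
$$
\E\bigl[\|\boldsymbol{\tau}_{\nn}-I_d\|_{\text{HS}}^2\bigr] \;=\; \sum_{i=1}^d \E[(\tau_{ii}-1)^2] + \sum_{i\neq j} \E[\tau_{ij}^2],
$$
I would decompose each diagonal term as $(1-\E[\tau_{ii}])^2 + \Var(\tau_{ii})$, bounding the bias via the identity $\E[\tau_{ii}] = \E[Y_i^2]$ and \eqref{Variance_r}, and the variance via \eqref{varGamma1}; the off-diagonal terms are controlled through $A(i,j)^2$ by \eqref{varcrossGamma1}. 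Since $\log(n_{2i}-n_{2i-1}) \ge \log(n_2-n_1)$ for every $i\ge 1$, each of the $d^2$ summands is at most $C_{\alpha,q}^2/(1+\log(n_2-n_1))^2$; summing and taking the square root produces the desired linear-in-$d$ bound.

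I expect the principal obstacle to lie precisely in (\textbf{A4}): a naive $L^1$ bound summing $\E[|\tau_{ij}-\delta_{ij}|]$ over all $d^2$ matrix entries would produce a spurious $d^2/\log$ rate, whereas the $L^2$-Hilbert-Schmidt formulation sketched above converts $d^2$ entries, each of size $1/\log$, into the required $d/\log$ after a square root. Identifying (or, if necessary, deriving) the correct $W_1$ estimate in terms of $\sqrt{\E[\|\boldsymbol{\tau}-I_d\|_{\text{HS}}^2]}$ - made available by the Malliavin-Stein machinery of \cite{HSI, Entropy} - is therefore the single most delicate step of the proof.
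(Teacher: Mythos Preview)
Your proposal is correct and follows essentially the same route as the paper's proof: the moment recursion via the Stein identity for (\textbf{A2}), the one-dimensional Stein bounds of Proposition~\ref{p:hsi} for (\textbf{A3}), and the Hilbert--Schmidt estimate \eqref{e:hg} for (\textbf{A4}). Your treatment of (\textbf{A4}) is in fact more explicit than the paper's one-line invocation of \eqref{e:hg} and Assumption~(ii) --- in particular, your bias--variance decomposition of the diagonal terms (handling $\E[\tau_{ii}]=\E[Y_i^2]$ via \eqref{Variance_r}) supplies a step the paper leaves implicit, and your identification of the $\sqrt{\text{HS}}$ bound as the device that converts $d^2$ summands into a linear-in-$d$ rate is exactly the point.
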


\begin{rmk}\label{remark-assumption(B)} {\rm
If the random sequence $X =\{X_n: n\geq 1\}$ is composed of functionals of an isonormal Gaussian process $G= \{G(h) : h\in \mathcal{H}\}$ and if each $X_n$ lies in the domain of the Malliavin derivative operator $D$ , then the previous assumption (i) is always fulfilled by taking 
$$\tau_{i,j}(Y_1,\cdots,Y_d)=\E\Big[\langle D Y_j, -D L^{-1} Y_i \rangle_\HH\left|\right. (Y_1,\cdots,Y_d)\Big],$$
where $L^{-1}$ stands for the pseudo-inverse of the Ornstein-Uhlenbeck generator { (see e.g. \cite[Section 2.8.2]{n-pe-book})}. In particular, if the sequence $X$ belongs to the $q$th Wiener chaos of $G$ (and therefore $L^{-1} X_n = -q^{-1} X_n$ for every $n$), one has the simple representation
\begin{equation}\label{e:simplesm}
\tau_{i,j}(Y_1,\cdots,Y_d)=\frac1q \E\Big[\langle D Y_j, D Y_i \rangle_\HH\left|\right. (Y_1,\cdots,Y_d)\Big],
\end{equation}
which also implies that the Stein matrix $\{\tau_{i,j}(Y_1,...,Y_d) : i,j=1,...,d\}$ is symmetric. Again, we refer the reader e.g. to \cite{Entropy} for a concise exposition of the required notions and to the monographs \cite{n-pe-book, NUbook} for more details.}
\end{rmk}

\begin{rmk}\label{D is empty}{\rm
When $\{X_n : n\geq 0\}$ lies in a finite sum of Wiener chaoses, Assumption (iii) is particularly easy to check. Indeed, using hypercontractivity properties $(\ref{e:hc})$, it is sufficient to check equation (\ref{thetahypercontract}) only in the case $\theta=2$. This case is indeed covered by Assumption (ii).}
\end{rmk}

{
\subsection{First examples: LIL for independent sequences}

As demonstrated in the sections to follow, the techniques developed in the present paper have been specifically devised for deducing laws of the iterated logarithm involving sums of random variables displaying some form of non-trivial dependence. However, in order to develop some intuition about the assumptions appearing in the statements of Theorem \ref{mainthm0} and Proposition \ref{p:checksm}, it is instructive to first focus on the case of independent random variables. We stress that the aim of this section is to provide an illustration of our techniques in a familiar framework: in particular, we do not aim at generality. The reader is referred e.g. to \cite[Section 12.5]{Dudley} for an exhaustive discussion of the LIL (and its history) for sequences of i.i.d. random variables.

\subsubsection{Rademacher sequences}

We start by considering the case of independent Rademacher random variables $\{\varepsilon_i : i\geq 1\}$ (that is, $\P[\varepsilon_i = 1] = 1/2 = \P[\varepsilon_i = -1]$, $i\geq 1$). In this case, it is well known that, by noting $S_n = \sum_{i=1}^n \varepsilon_i$, $n\geq 1$,
\begin{eqnarray}\label{e:ll}
&& \limsup_{n\to\infty} \frac{S_n}{\sqrt{2  n \log \log n}}= 1=- \liminf_{n\to\infty} \frac{S_n}{\sqrt{2 n \log \log n}},\end{eqnarray}
with probability one. In what follows, we shall show that \eqref{e:ll} can be directly deduced from Theorem \ref{mainthm0} and Proposition \ref{p:checksm}. In order to accomplish this task, it is indeed preferable to show the equivalent statement: with probability one,
\begin{eqnarray}\label{e:lll}
&& \limsup_{n\to\infty} \frac{X_n}{\sqrt{2  n \log \log n}}= 1 =- \liminf_{n\to\infty} \frac{X_n}{\sqrt{2 n \log \log n}},\end{eqnarray}
where $X_n = S_n+U$ and $U$ is a random variable uniformly distributed on $[-1,1]$, independent of the $\varepsilon_i$'s. It is clear that, in this case, Assumption {\bf (A1)} is verified for the choice of function $g(n) = \sqrt{n}$. Moreover, a simple computation (based e.g. on \cite[Lemma 3.3]{cha}), shows that, for any choice of $\nn$, the vector $\mathbf{Y}_{\nn}$ appearing at Assumption (i) of Proposition \ref{p:checksm} admits a Stein matrix ${\bf \tau}_{\nn}= \{\tau_{i,j} : i,j=1,...,d\}$ such that, for $i=1,...,d$,
$$
\tau_{i,i} =\frac{\E\left\{ n_{2i} - n_{2i-1} -(S_{n_{2i}} - S_{n_{2i-1}}) U +(1-U^2)/2 \,\Big | \, \mathbf{Y}_{\nn} \right\}}{n_{2i} - n_{2i-1}},
$$
and, for $1\leq i\neq j\leq d$,
$$
\tau_{i,j} =\frac{1}{\sqrt{(n_{2i} - n_{2i-1})(n_{2j} - n_{2j-1})}} \E\left\{ (1-U^2)/2 \,\Big | \, \mathbf{Y}_{\nn} \right\},
$$
and it is a matter of a simple verification to check that Assumption (ii) of Proposition \ref{p:checksm} is indeed satisfied. Finally, since for any choice of integers $n_2>n_1$ one has that
$$
\tau\left( \frac{X_{n_2}-X_{n_1}}{\sqrt{n_2-n_1}}\right) \!\!=\!\!\frac{\E\left\{ n_{2} - n_{1}\! -\!(S_{n_{2}} - S_{n_{1}}) U \!+\!(1-U^2)/2 \,\Big | \,  \frac{X_{n_2}-X_{n_1}}{\sqrt{n_2-n_1}}\right\}}{n_{2} - n_{1}},$$
we deduce immediately from a standard application of Khinchin inequality that, for every $\theta >2$,
$$
\Big\Vert  \tau\left(\frac{X_{n_2}-X_{n_1}}{\sqrt{n_2-n_1}}\right) -1 \Big \Vert_{\theta}\leq \frac{(\theta-1)^{1/2} +1}{n_2-n_1}.
$$
This implies in particular that Assumption (iii) in Proposition \ref{p:checksm} is verified, and consequently that \eqref{e:lll} (and therefore \eqref{e:ll}) holds.

\medskip

The crucial point of the previous example is of course that, although the random variables $S_n$ are discrete and not directly amenable to analysis by means of our techniques, the simple addition of the independent bounded component $U$ makes Stein matrices appear very naturally. For the time being, it is unclear whether a similar smoothing operation can be realised for an arbitrary sequence of independent discrete random variables.

\subsubsection{Random variables with densities}

We now consider a sequence $\{Z_ i: i\geq 1\}$ of i.i.d. real-valued centered random variables with unit variance. We assume that the law of $Z_1$ is absolutely continuous with respect to the Lebesgue 
measure, with a density $f$ whose support is assumed (for simplicity) to be a (possibly unbounded) interval. Writing $X_n = S_n = \sum_{i=1}^nZ_i$, it is of course well-known 
(see e.g. \cite[Theorem 12.5.1]{Dudley}) that relation \eqref{e:lll} holds with probability one. In what follows, we shall show that, under some additional assumption on the regularity of the density $f$, 
such a result can be directly deduced from Theorem \ref{mainthm0} and Proposition \ref{p:checksm}. To this end, we define an auxiliary function $s : \R\to \R$ as follows:
$$
s(x) = \frac{\int_x^\infty yf(y)dy}{f(x)},
$$
for each $x$ in the support of $f$, and $s(x) = 0$ otherwise. Then, it is a simple exercise in integration to show that the random variable $s(Z_1)$ is indeed a Stein factor for $Z_1$, that is: for every 
smooth mapping $\varphi$, one has that $\E[Z_1\varphi(Z_1)] = \E[s(Z_1)\varphi'(Z_1)]$. Moreover, one can check that $s(Z_1)\geq 0$ with probability one (see e.g. \cite[Chapter VI]{S86}). The following 
statement contains the announced connection with the main results of the present paper.

\begin{prop}\label{p:trio} Let the above notation and assumptions prevail. If $\E[s(Z_1)^2]<\infty$, then Assumption {\bf (A1)} together with Assumptions {\rm (i)} and {\rm (ii)} of Proposition 
\ref{p:checksm} are verified for the choice of function $g(n) = \sqrt{n}$. If moreover there exist constants $0<\lambda, K<\infty$ such that 
\begin{equation}\label{e:cd}
 \E[s(Z_1)^\theta] ^{1/\theta} \leq  K\theta^\lambda, \quad \theta\geq 2,
\end{equation}
then also Assumption {\rm (iii)} of Proposition \ref{p:checksm} is satisfied.
\end{prop}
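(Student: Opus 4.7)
The checks of Assumption \textbf{(A1)} and of item (i) of Proposition \ref{p:checksm} are essentially immediate. Since the $Z_i$'s are i.i.d. centered with unit variance, $\E[(X_{n_2}-X_{n_1})^2]=n_2-n_1$ \emph{exactly}, so the left-hand side of \eqref{Variance_r} vanishes for $g(n)=\sqrt n$. For (i), note that the coordinates of $\mathbf{Y}_{\nn}$ depend on disjoint blocks of $Z_j$'s, hence $Y_1,\dots,Y_d$ are mutually independent. I would propose the diagonal Stein matrix
$$
\tau_{k,k}(\mathbf{Y}_{\nn})=\frac{1}{N_k}\,\E\!\left[\sum_{j=n_{2k-1}+1}^{n_{2k}} s(Z_j)\,\bigg|\,\mathbf{Y}_{\nn}\right],\qquad \tau_{k,\ell}\equiv 0 \text{ for } k\ne \ell,
$$
with $N_k:=n_{2k}-n_{2k-1}$. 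The identity \eqref{eq:26bis} is checked by fixing all $Z_j$'s except one, say $Z_i$ lying in block $k$, applying the one-dimensional Stein identity for $Z_i$ to the function $z\mapsto g(Y_1,\dots,Y_k^{(i)}(z),\dots,Y_d)$ (where $Y_k^{(i)}(z)$ is $Y_k$ with $Z_i$ replaced by $z$), observing that by the chain rule only $\partial_k g$ appears (with a Jacobian $1/\sqrt{N_k}$), and finally summing over $i$ in the $k$-th block.

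For (ii), the off-diagonal bound \eqref{varcrossGamma1} is automatic since $\tau_{k,\ell}\equiv 0$ for $k\ne \ell$. For the diagonal bound \eqref{varGamma1}, conditional expectation contracts variance, and independence of the $Z_j$'s gives
$$
\Var[\tau_{k,k}(\mathbf{Y}_{\nn})]\le \frac{1}{N_k^2}\,\Var\!\left[\sum_{j=n_{2k-1}+1}^{n_{2k}} s(Z_j)\right]=\frac{\Var[s(Z_1)]}{N_k},
$$
which is finite under $\E[s(Z_1)^2]<\infty$. Along the subsequence $n_i=[q^{(m+i)^{1+\alpha}}]$ the length $N_k$ grows super-exponentially in $k$, so $N_k^{-1/2}\le C_{\alpha,q}/(1+\log N_k)$, yielding \eqref{varGamma1}.

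The non-trivial point is (iii). Taking $\varphi(x)=x$ in the identity $\E[Z_1\varphi(Z_1)]=\E[s(Z_1)\varphi'(Z_1)]$ shows $\E[s(Z_1)]=1$. With $N=n_2-n_1$, the one-dimensional Stein factor of $Y_N=(X_{n_2}-X_{n_1})/\sqrt N$ reads $\tau(Y_N)=N^{-1}\E[\sum_{j=n_1+1}^{n_2}s(Z_j)\mid Y_N]$, so that $\tau(Y_N)-1$ equals the conditional expectation of $N^{-1}\sum_j(s(Z_j)-1)$, a sum of i.i.d. centered variables. Since conditional expectation is an $L^\theta$-contraction, a Rosenthal (or Marcinkiewicz--Zygmund) inequality with polynomial dependence of the constant in $\theta$ yields, for $\theta\ge 2$,
$$
\|\tau(Y_N)-1\|_\theta\le \frac{1}{N}\left\|\sum_{j=n_1+1}^{n_2}(s(Z_j)-1)\right\|_\theta \lesssim \frac{\sqrt\theta}{\sqrt N}\,\|s(Z_1)-1\|_\theta.
$$
Hypothesis \eqref{e:cd} gives $\|s(Z_1)-1\|_\theta\le K'\theta^\lambda$, while the range $\theta\in[1,2]$ is handled by H\"older reducing to $\theta=2$. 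Altogether, $\|\tau(Y_N)-1\|_\theta\le C\,\theta^{\lambda+1/2}/\sqrt N$, which is certainly dominated by $C\,\theta^{\lambda+1/2}/(1+\log N)$, establishing \eqref{thetahypercontract}.

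The single delicate point is the $L^\theta$-bound for the centered sum in (iii): one must invoke a version of Rosenthal's inequality whose constant depends only polynomially in $\theta$. Standard sharp versions (with leading constant of order $\theta/\log\theta$) more than suffice for our purposes. Apart from this, the whole argument reduces to combining the independence of the increments of $(Z_j)$, the elementary one-dimensional Stein identity for $Z_1$, and the super-exponential growth of $N_k$ along the scale $n_i=[q^{(m+i)^{1+\alpha}}]$.
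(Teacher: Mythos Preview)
Your proof is correct and follows essentially the same route as the paper's: the same diagonal Stein matrix built from $s(Z_j)$, the same variance bound $\Var[\tau_{k,k}]\le \Var[s(Z_1)]/N_k$, and a Rosenthal-type inequality for (iii). Two minor remarks: the super-exponential growth of $N_k$ is not needed for \eqref{varGamma1}, since the elementary inequality $N^{-1/2}\le C/(1+\log N)$ holds for all $N\ge 1$; and the paper uses the full two-term Rosenthal bound (with constant $\theta/\log\theta$) rather than your Marcinkiewicz--Zygmund form with constant $\sqrt{\theta}$, but either suffices and both lead to a polynomial power of $\theta$ in \eqref{thetahypercontract}.
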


\begin{rmk}{\rm It is easy to find sufficient conditions on $f$, ensuring that \eqref{e:cd} is satisfied. For instance, if $f$ has the form $f(x) = (2\pi)^{-1/2} q(x) e^{-x^2/2}$, where $q$ is some smooth mapping satisfying $q(x) \geq c >0$ and $|q'(x)|\leq C<\infty$, then one has that $s(x) \leq 1+ \sqrt{2\pi}C/c<\infty$, so that the requirement \eqref{e:cd} is trivially met. 
}
\end{rmk}
\medskip

\noindent{\it Proof of Proposition \ref{p:trio}.} The fact that assumption {\bf (A1)} is satisfied for $g(n) = \sqrt{n}$ is trivial. Moreover, for any choice of $\nn$, the vector $\mathbf{Y}_{\nn}$ appearing at Assumption (i) of Proposition \ref{p:checksm} admits a Stein matrix ${\bf \tau}_{\nn}= \{\tau_{i,j} : i,j=1,...,d\}$ such that, for $i=1,...,d$,
$$
\tau_{i,i} =\frac{1}{n_{2i} - n_{2i-1}}\E\left[ \sum_{k=n_{2i-1}+1}^{n_{2i}} s(Z_k) \, \Big| \, \mathbf{Y}_{\nn}\right],
$$
and $\tau_{i,j} = 0$ for every $i\neq j$. Since ${\bf Var} (\tau_{i,i} )\leq (n_{2i} - n_{2i-1})^{-1} \E[(1 - s(Z_1))^2]$, we deduce immediately that Assumption (ii) of Proposition \ref{p:checksm} is indeed satisfied. To see that relation \eqref{e:cd} implies that Assumption (iii) of Proposition \ref{p:checksm} is also verified, we shall apply Rosenthal inequality for centered random variables (see e.g. \cite[p. 46]{dlpg}), together with the fact that $\E[s(Z_1)] = 1$ and that, for every choice of integers $n_1<n_2$, a Stein factor for $\frac{X_{n_2}-X_{n_1}}{\sqrt{n_2-n_1}}$ is given by
$$
\tau\left( \frac{X_{n_2}-X_{n_1}}{\sqrt{n_2-n_1}}\right) =\frac{1}{n_2-n_1} \sum_{k=n_1+1}^{n_2} s(Z_k).
$$
According to the Rosenthal inequality, one has indeed that, for some universal finite constant $C$ and for every $\theta\geq 2$,
\begin{eqnarray*}
&&\Big\Vert  \tau\left(\frac{X_{n_2}-X_{n_1}}{\sqrt{n_2-n_1}}\right) -1 \Big \Vert_{\theta}\\ 
&&  \leq C \frac{\theta}{\log \theta} \left( \frac{1}{\sqrt{n}} \E[(s(Z_1)-1)^2]^{1/2} + \frac{1}{n^{1-1/\theta}} \E[(s(Z_1)-1)^\theta]^{1/\theta} \right),
\end{eqnarray*}
from which we immediately deduce the desired conclusion. \qed

\medskip

Starting from the next section, we shall focus on sequences of dependent random variables living on a Gaussian space.

}

\subsection{LIL for Gaussian sequences}

As a more substantial application of our main results, we shall now prove a general version of the law of the iterated logarithm for a centered stationary Gaussian sequence $\{Z_k : k\geq 1\}$ with correlation function $r(k):=\E[Z_n Z_{n+k}]$. We write $X_n=\sum_{k=1}^n Z_k.$ 

The following statement is the main result of the section.

\begin{thm}\label{ILLGaussian}
Assume that $g(n):=\sqrt{\E\left[X_n^2\right]}\sim n^a L(n)$, where $a \in (0,1)$ and $L$ is a slowly varying function. Moreover, we assume that 

\begin{equation}\label{covarianceassup}
\sum_{k=n_1}^{n_2} r(k) = O\Big((n_2-n_1)^{2a-1} L(n_2-n_1)\Big).
\end{equation}
Then, we have the following law of the iterated logarithm

\begin{equation}\label{ILLequation}
\limsup_{n\to\infty} \frac{X_n}{\sqrt{2 g^2(n) \log\log n}}=1\,\,\text{a.s.-}\P.
\end{equation}
\end{thm}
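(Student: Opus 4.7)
The plan is to apply Theorem \ref{mainthm0}, deducing Assumptions \textbf{(A2)}--\textbf{(A4)} from Proposition \ref{p:checksm}. The Gaussian stationary structure makes most verifications immediate. By stationarity of $Z$, one has $\E[(X_{n_2}-X_{n_1})^2]=g^2(n_2-n_1)$ exactly, so \textbf{(A1)} holds with zero error in \eqref{Variance_r}. Since $\mathbf{Y}_{\nn}$ is centered Gaussian, the classical integration-by-parts formula shows that the deterministic matrix $\tau_{i,j}:=\E[Y_iY_j]$ is a Stein matrix for it, giving condition (i) of Proposition \ref{p:checksm}. Because each normalized increment $(X_{n_2}-X_{n_1})/g(n_2-n_1)$ is exactly standard Gaussian, its Stein factor is $\tau\equiv 1$, so $\|\tau-1\|_\theta = 0$ for every $\theta\geq 1$ and condition (iii) is trivial.

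The heart of the argument is condition (ii) of Proposition \ref{p:checksm}. The diagonal variances $\Var(\tau_{i,i})$ vanish because $\tau$ is deterministic, so \eqref{varGamma1} is automatic; it remains to estimate the off-diagonal entries $|\E[Y_iY_j]|$ for $i<j$. Writing $\ell_i:=n_{2i}-n_{2i-1}$ and $\ell_j:=n_{2j}-n_{2j-1}$, I would expand
$$
\E[(X_{n_{2i}}-X_{n_{2i-1}})(X_{n_{2j}}-X_{n_{2j-1}})] \;=\; \sum_{k=n_{2i-1}+1}^{n_{2i}}\sum_{l=n_{2j-1}+1}^{n_{2j}} r(l-k),
$$
and apply hypothesis \eqref{covarianceassup} to the inner sum (a block of $\ell_j$ consecutive values of $r$) to obtain $|\mathrm{Cov}(\cdot,\cdot)|\leq C\,\ell_i\,\ell_j^{2a-1}L(\ell_j)$. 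Dividing by $g(\ell_i)g(\ell_j)\sim\ell_i^a L(\ell_i)\ell_j^aL(\ell_j)$ then yields
$$
|\E[Y_iY_j]|\;\leq\;\frac{C}{L(\ell_i)}\left(\frac{\ell_i}{\ell_j}\right)^{1-a}.
$$

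The final step converts this into the decay $1/(1+\log\ell_i)$ required by \eqref{varcrossGamma1}, via the explicit geometry of the scales $n_k=[q^{(m+k)^{1+\alpha}}]$. One has $\log\ell_i\asymp(m+2i)^{1+\alpha}\log q$, and the mean value theorem applied to $x\mapsto x^{1+\alpha}$ gives $(\ell_i/\ell_j)^{1-a}\leq q^{-2(1-a)(1+\alpha)(j-i)(m+2i)^\alpha}$, which is super-polynomially small in $m+2i$. This decay must beat the at-most sub-polynomial growth of $1/L(\ell_i)$ allowed by Potter's inequality (further constrained by the simultaneous assumptions $g^2(n)\sim n^{2a}L^2(n)$ and $\sum_{k}r(k)=O(n^{2a-1}L(n))$, which link $L$ via summation by parts and severely restrict its admissible slow variation). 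Once (ii) is secured, Proposition \ref{p:checksm} grants \textbf{(A2)}--\textbf{(A4)} and Theorem \ref{mainthm0} yields \eqref{ILLequation}. The main obstacle is precisely this last quantitative step: controlling the factor $L(\ell_i)$ in the off-diagonal bound requires a careful interplay between Potter's inequality, the super-exponential spacing built into $\nn$, and the structural coupling that the two standing hypotheses impose on the slowly varying function $L$.
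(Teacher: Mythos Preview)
Your approach coincides with the paper's: reduce to Proposition~\ref{p:checksm}, note that the Stein matrix of a centered Gaussian vector is its (deterministic) covariance matrix, so (i), (iii) and the diagonal part of (ii) are immediate, and bound the off-diagonal entries by applying \eqref{covarianceassup} to the inner sum over $l$.

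The step you label the ``main obstacle'' is dispatched in the paper in one line, with no structural-coupling argument. After the same covariance expansion, the paper writes the bound as
\[
|\E[Y_iY_j]|\ \le\ C\Big(\frac{\ell_i}{\ell_j}\Big)^{1-a}\frac{L(\ell_j)}{L(\ell_i)}\ \le\ C_\epsilon\Big(\frac{\ell_i}{\ell_j}\Big)^{1-a-\epsilon},
\]
the second inequality being Potter's bound $L(n)/L(m)\le C_\epsilon (n/m)^\epsilon$ for $n\ge m$ applied to the \emph{ratio} of slowly varying values. Any $\epsilon<1-a$ then makes the right-hand side of order $q^{-c(m+2i)^\alpha}$, which is far smaller than the target $(1+\log\ell_i)^{-1}\asymp (m+2i)^{-(1+\alpha)}$. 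That is the whole argument.

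Your displayed intermediate bound has a bare $1/L(\ell_i)$ rather than the ratio $L(\ell_j)/L(\ell_i)$. This discrepancy is real and comes from the single power of $L$ in \eqref{covarianceassup} versus the normalisation $g(n)\sim n^a L(n)$: the paper's own computation tacitly treats the slowly varying factors as cancelling to a ratio (equivalently, reads \eqref{covarianceassup} with $L^2$ in place of $L$, which is the scaling consistent with $g^2(n)\sim n^{2a}L^2(n)$). Under that intended reading, Potter's inequality on the ratio closes the proof immediately; the elaborate interplay you anticipate is not needed.
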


\begin{rmk}{\rm
We emphasize that the condition (\ref{covarianceassup}) is strongly related to the fact that $g(n)$ is regularly varying. This can be seen from the equation
$$g^2(n)=2\sum_{k=1}^{n-1}r(k)(n-k)+n.$$
We conjecture that assumption (\ref{covarianceassup}) can indeed be removed, but such an improvement seems difficult for the time being.}
\end{rmk}

The next corollary generalizes some results contained in \cite{arconesGaussian, DEO74, LS1978, Taqqu}: to our knowledge, it corresponds to the most general statement for stationary Gaussian sequences available in the literature. The fact that $\sum_{k} |r(k)|<\infty$ is sufficient to get the law of the iterated logarithm is due to M. Arcones \cite[Corollary 2.1]{arconesGaussian}. Let us stress that Lai and Stout have provided in \cite[Theorem 4]{LS1978} criteria for the law of the iterated logarithm under additional assumptions on the slowly varying function $L$. Case (i) of the next corollary with $b\le 1$ seems to be new.

\begin{cor}\label{CoroGaussian}
Let the assumptions and notation of the present section prevail. The following two implications hold:
\begin{itemize}
\item[(i)] Let $g(n)=n^a L(n)$ with $a \in (0,1)$. If $r(k)=O(\frac{1}{k^b})$ where $b=2-2a$, then (\ref{ILLequation}) holds.
\item[(ii)] If $\sum_{k=1}^\infty |r(k)|<\infty$, then (\ref{ILLequation}) holds.
\end{itemize}
\end{cor}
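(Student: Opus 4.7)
The plan is to deduce both (i) and (ii) directly from Theorem \ref{ILLGaussian}, by verifying in each case its two hypotheses: the regular variation of $g$ and the covariance bound \eqref{covarianceassup}.

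\smallskip

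For case (i), the regular variation of $g$ is built into the assumptions, so the task is to estimate $\sum_{k=n_1}^{n_2}|r(k)|$. With $b=2-2a$ and $|r(k)|\leq C k^{-b}$, I would compare the sum with the integral $\int_{n_1-1}^{n_2} x^{-b}\,dx = (n_2^{1-b}-(n_1-1)^{1-b})/(1-b)$. In the main regime $a>1/2$, where $0<1-b<1$, the function $x\mapsto x^{1-b}$ is concave with value $0$ at the origin and therefore subadditive, which yields $n_2^{1-b}-(n_1-1)^{1-b}\leq (n_2-n_1+1)^{1-b}\leq 2^{1-b}(n_2-n_1)^{1-b}$. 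Hence
$$\sum_{k=n_1}^{n_2}|r(k)|=O\bigl((n_2-n_1)^{2a-1}\bigr),$$
which, since $L$ is slowly varying and bounded below on $[1,\infty)$, is a fortiori $O((n_2-n_1)^{2a-1}L(n_2-n_1))$, i.e. \eqref{covarianceassup}. The range $a<1/2$ forces absolute summability of $r$ and reduces to case (ii).

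\smallskip

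For case (ii), the regular variation of $g$ must be extracted from $\sum_k|r(k)|<\infty$. Starting from
$$g^2(n)=n\sigma_0^2+2\sum_{k=1}^{n-1}(n-k)r(k)=n\Bigl(\sigma_0^2+2\sum_{k=1}^{n-1}r(k)\Bigr)-2\sum_{k=1}^{n-1}k\,r(k),$$
the first bracket tends to $\sigma_0^2+2R$ with $R=\sum_{k\geq 1}r(k)$, while Kronecker's lemma applied to the convergent series $\sum_k r(k)$ gives $\sum_{k=1}^{n-1}k\,r(k)=o(n)$. Hence $g^2(n)\sim n(\sigma_0^2+2R)$ and $g(n)\sim c\sqrt{n}$ for some $c>0$, which is the regular-variation hypothesis with $a=1/2$ and $L\equiv c$. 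Condition \eqref{covarianceassup} then amounts to $\left|\sum_{k=n_1}^{n_2}r(k)\right|=O(1)$, which is immediate from absolute summability. Theorem \ref{ILLGaussian} concludes both cases.

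\smallskip

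The delicate point I foresee is the borderline subcase $a=1/2$ in (i): here self-consistency of $g(n)=\sqrt{n}\,L(n)$ with $r(k)=O(1/k)$ essentially forces $L$ to be of logarithmic type, and the crude integral/subadditivity estimate above is not sharp enough to bound $\sum_{k=n_1}^{n_2} r(k)$ uniformly in all $n_1<n_2$. A finer summation-by-parts argument exploiting cancellations in $r$, combined with the explicit logarithmic growth of $g^2$ read off from the identity above, seems to be the natural way to close this subcase.
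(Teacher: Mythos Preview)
Your approach—verifying the hypotheses of Theorem \ref{ILLGaussian} in each case—is exactly the intended one, and your treatment of case (ii) and of case (i) with $a<1/2$ is essentially fine (modulo the standard non-degeneracy assumption $\sigma_0^2+2R>0$, needed so that $c>0$ in (ii)).

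The genuine gap is in case (i) with $a>1/2$. You claim that ``$L$ is slowly varying and bounded below on $[1,\infty)$'', and use this to upgrade your bound $O\bigl((n_2-n_1)^{2a-1}\bigr)$ to the required $O\bigl((n_2-n_1)^{2a-1}L(n_2-n_1)\bigr)$. But slowly varying functions are \emph{not} bounded away from zero in general. A concrete instance compatible with the hypotheses: take $r(k)=1/(k^{b}\log k)$ with $b=2-2a<1$, which certainly satisfies $r(k)=O(k^{-b})$; a Karamata-type computation on $g^2(n)=n+2\sum_{k<n}(n-k)r(k)$ then gives $g^2(n)\sim Cn^{2a}/\log n$, so that $L(n)\sim C'/\sqrt{\log n}\to 0$. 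In this example your crude estimate $\sum_{k=n_1}^{n_2}|r(k)|=O((n_2-n_1)^{2a-1})$ is strictly weaker than \eqref{covarianceassup}, even though \eqref{covarianceassup} does in fact hold. To close the gap one must retain the slowly varying factor of $r$ through the summation (e.g.\ via Potter bounds), rather than discarding it at the first step.

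Your acknowledged difficulty at the borderline $a=1/2$ is real, and in fact more serious than you suggest: for $r(k)\sim C/k$ one computes $L(n)\sim C'\sqrt{\log n}$, whereas $\sum_{k=1}^{n}r(k)\sim C\log n$, so \eqref{covarianceassup} as written actually \emph{fails} when $n_1=1$. This subcase therefore cannot be obtained by applying Theorem \ref{ILLGaussian} as a black box; one must go back into its proof (where the relevant quantity is the normalized covariance, for which a direct estimate does work).
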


\subsection{LIL in the Breuer-Major theorem: critical and non-critical regimes}

As anticipated in the Introduction, our main results allow one to deduce sharp laws of the iterated logarithm for the Hermite variations of a fractional Brownian motion. This fact is resumed in the next two propositions. We stress that Theorem \ref{exam-noncritical} can be deduced from \cite[Proposition 1]{arcones}, whereas Theorem \ref{exam-critical} seems to be outside the scope of any other available technique.

\begin{thm}\label{exam-noncritical}
Let $q \ge 2$ and $H_q$ stands for Hermite polynomial of degree $q$. Assume that $B^H=\{ B^H_t\}_{t \in \R}$ be a fractional Brownian motion with Hurst parameter $H < 1 -\frac{1}{2q}$. Set 
$Z_k=B^{H}_{k+1} - B^H_k, \quad k\in \Z$. Define 
\begin{equation}\label{partialsum}
\begin{split}
X_n:=  \sum_{k=0}^{n-1} H_{q}(Z_k) & = \sum_{k=0}^{n-1} H_{q}(B^{H}_{k+1} - B^H_k)\, \quad n \ge1.
\end{split}
\end{equation}
Then, Theorem \ref{mainthm0} with $g(n) \sim \sqrt{n}$ implies that there exists a positive constant $l$ such that 
\begin{equation*}
\limsup_{n\to \infty} \frac{X_n}{\sqrt{2 n \, \log \log n}} = l \quad \text{a.s.-}\P.
\end{equation*}
Here,
$$l=\frac{q!}{2^q}\sum_{r\in\Z}\left(|r+1|^{2H}+|r-1|^{2H}-2|r|^{2H}\right)^q.$$
\end{thm}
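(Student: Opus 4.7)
The plan is to verify Assumptions \textbf{(A1)}--\textbf{(A4)} of Theorem \ref{mainthm0} via the Malliavin-calculus criteria of Proposition \ref{p:checksm}. Since $X_n = \sum_{k=0}^{n-1} H_q(Z_k)$ lies entirely in the $q$-th Wiener chaos of the isonormal Gaussian process associated with $B^H$, one has $X_n = I_q(f_n)$ for an explicit symmetric kernel $f_n$, and the natural Stein matrix is supplied by \eqref{e:simplesm} via Remark \ref{remark-assumption(B)}. By Remark \ref{D is empty}, condition (iii) of Proposition \ref{p:checksm} reduces to its $\theta=2$ instance, which is already subsumed by condition (ii). The whole argument therefore boils down to \textbf{(A1)}, \textbf{(A2)} and conditions (i)--(ii) of Proposition \ref{p:checksm}.

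Set $\rho(k)=\E[Z_0 Z_k] = \tfrac{1}{2}(|k+1|^{2H}+|k-1|^{2H}-2|k|^{2H}) \sim H(2H-1)|k|^{2H-2}$. By stationarity of the increments of $B^H$,
\begin{equation*}
\E[(X_{n_2}-X_{n_1})^2] = q!\!\!\!\sum_{|r|<n_2-n_1}\!\!(n_2-n_1-|r|)\,\rho(r)^q.
\end{equation*}
Under $H<1-\tfrac{1}{2q}$ one has $q(2H-2)<-1$, so $\sigma_q^2:=q!\sum_r\rho(r)^q$ is an absolutely convergent series, and a standard computation yields $\E[X_m^2]=\sigma_q^2 m + O(m^{1-\delta})$ for some $\delta=\delta(H,q)>0$. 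Thus \textbf{(A1)} is satisfied with $g(n)=\sigma_q\sqrt n$ and a polynomial, hence $o(1/\log m)$, relative error. Assumption \textbf{(A2)} then follows at once from hypercontractivity \eqref{e:hc} in the $q$-th Wiener chaos, which promotes the uniform $L^2$ control into every $L^{2p}$ bound.

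For conditions (i)--(ii) of Proposition \ref{p:checksm}, write each component of $\mathbf{Y}_\nn$ as $Y_i=I_q(f_i)$ with $f_i = g(n_{2i}-n_{2i-1})^{-1}\sum_{k=n_{2i-1}}^{n_{2i}-1} e_k^{\otimes q}$, where $e_k$ is the element of $\HH$ representing $Z_k$. Standard chaos computations provide combinatorial constants for which
\begin{equation*}
\Var[\tau_{i,i}(\mathbf{Y}_\nn)]\le C_q\sum_{p=1}^{q-1}\|f_i\otimes_p f_i\|^2,\qquad \E[\tau_{i,j}(\mathbf{Y}_\nn)^2]\le C_q\sum_{p=1}^{q-1}\|f_i\otimes_p f_j\|^2, \ i\ne j.
\end{equation*}
Classical Breuer--Major estimates (see e.g.\ \cite[Ch.~7]{n-pe-book}) show that, in the sub-critical regime $H<1-\tfrac{1}{2q}$, the diagonal contractions satisfy $\|f_i\otimes_p f_i\|^2=O((n_{2i}-n_{2i-1})^{-\beta})$ for some $\beta=\beta(H,q,p)>0$. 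The cross contractions $\|f_i\otimes_p f_j\|^2$ for $i<j$ reduce to sums of $|\rho(k-k')|^p\,|\rho(\,\cdot\,)|^{q-p}$ over pairs $(k,k')\in I_i\times I_j$; since the super-exponential spacing $n_k=[q^{(m+k)^{1+\alpha}}]$ makes the blocks extremely separated and $|\rho|$ decays polynomially, these terms are even smaller. In both cases the bound is polynomial in $(n_{2i}-n_{2i-1})^{-1}$, hence a fortiori $O(1/\log(n_{2i}-n_{2i-1}))$.

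The main obstacle is the uniformity of these contraction bounds in the dimension $d$ and the starting index $m$: the constant $C_{\alpha,q}$ of \eqref{varGamma1}--\eqref{varcrossGamma1} must be absolute. This is precisely what the choice of $\nn$ provides: the super-exponential spacing makes all off-diagonal interactions negligible and the dominant within-block decay rate depends only on $H$ and $q$, not on the block index. Once conditions (i)--(ii) are validated, Proposition \ref{p:checksm} yields \textbf{(A3)}--\textbf{(A4)}, Theorem \ref{mainthm0} produces the announced LIL, and $l$ is identified through the Breuer--Major variance formula $\sigma_q^2 = q!\sum_r\rho(r)^q = \frac{q!}{2^q}\sum_r(|r+1|^{2H}+|r-1|^{2H}-2|r|^{2H})^q$.
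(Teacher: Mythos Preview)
Your proposal is correct and follows essentially the same route as the paper, which explicitly defers the non-critical case to the template proof of Theorem \ref{exam-critical}: verify \textbf{(A1)} directly and then check (i)--(iii) of Proposition \ref{p:checksm} using the chaotic Stein matrix \eqref{e:simplesm} together with hypercontractivity. The only real technical difference is in the off-diagonal part of (ii): the paper uses \cite[Lemma 6.2.1]{n-pe-book} to reduce $\E[\langle DY_i,DY_j\rangle_{\HH}^2]$ to controlling $\E[Y_iY_j]^2$ plus the diagonal variances, whereas you bound cross-contractions $\|f_i\otimes_p f_j\|$ directly; note that your displayed bound for $\E[\tau_{i,j}^2]$ omits the $p=q$ (covariance-squared) term, which must also be estimated, but this is handled by the same super-exponential spacing argument you already invoke.
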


\begin{thm}\label{exam-critical}
 Let the notation of Proposition \ref{exam-noncritical} prevail, and set $H=1-\frac{1}{2q}$. Then, applying Theorem \ref{mainthm0} with $g(n) \sim \sqrt{n \log n}$ implies that there exists a positive constant $l$ such 
 that 
\begin{equation*}
\limsup_{n\to \infty} \frac{X_n}{\sqrt{2 n\log n \, \log \log n}}=l   \quad \text{a.s.-}\P.
\end{equation*}
In this case,
$$l=2 q! \left(1-\frac{1}{q}\right)^q \left(1-\frac{1}{2q}\right)^q.$$
\end{thm}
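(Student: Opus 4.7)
The plan is to apply Theorem \ref{mainthm0} by verifying Assumptions \textbf{(A1)}--\textbf{(A4)} for $X_n = \sum_{k=0}^{n-1} H_q(Z_k)$ with $g(n)$ of order $\sqrt{n\log n}$. Since each $X_n$ lives in the $q$-th Wiener chaos of the isonormal Gaussian process generated by $B^H$, Proposition \ref{p:checksm} together with Remarks \ref{remark-assumption(B)} and \ref{D is empty} reduces the entire verification to $L^2$ estimates on the entries of the symmetric Stein matrix
\begin{equation*}
\tau_{i,j}(\mathbf{Y}_{\nn}) = \frac{1}{q}\, \E\bigl[\langle DY_i, DY_j\rangle_{\HH} \,\big|\, \mathbf{Y}_{\nn}\bigr],
\end{equation*}
where $Y_i = (X_{n_{2i}} - X_{n_{2i-1}})/g(n_{2i}-n_{2i-1})$. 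The hypercontractivity property \eqref{e:hc} upgrades any $L^2$ control to the $\theta$-norm bound \eqref{thetahypercontract} of (iii) in Proposition \ref{p:checksm}, and yields Assumption \textbf{(A2)} for free.

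To check Assumption \textbf{(A1)}, I would compute $g(n)^2 = \E[X_n^2] = q!\sum_{|k|<n}(n-|k|)\rho(k)^q$, where $\rho(k) = \tfrac{1}{2}(|k+1|^{2H} + |k-1|^{2H} - 2|k|^{2H})$ is the covariance of the increments $Z_k$. At criticality $2H-2 = -1/q$, so that $\rho(k)^q \sim c_q\, k^{-1}$ with $c_q = (H(2H-1))^q = (1-\tfrac{1}{2q})^q(1-\tfrac{1}{q})^q$, and a Cesàro-type summation gives $g(n)^2 \sim 2 q!\, c_q \, n\log n$, from which the explicit constant $l$ of the statement emerges. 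The quantitative part of \eqref{Variance_r} then amounts to showing that this Cesàro asymptotics has a relative error of order $1/\log(n_2-n_1)$, which is obtained via a second-order expansion of $(|k+1|^{2H}+|k-1|^{2H}-2|k|^{2H})^q$ and a careful summation by parts.

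The core of the proof is the verification of the Stein matrix estimates \eqref{varGamma1} and \eqref{varcrossGamma1}. Writing each $Y_i$ as a $q$-th multiple Wiener--It\^o integral with kernel $f_i = g(n_{2i}-n_{2i-1})^{-1}\sum_{k\in(n_{2i-1},n_{2i}]} e_k^{\otimes q}$ (where $e_k$ represents the unit vector associated with $Z_k$), the product formula expands $q^{-1}\langle DY_i, DY_j\rangle_{\HH}$ as a sum over $r = 0,\ldots,q-1$ of multiple integrals of the contractions $f_i\otimes_r f_j$, whose $\HH^{\otimes(2q-2r)}$-norms reduce to sums of products of $\rho$-values. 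For the diagonal entry $\tau_{i,i}$, the dominant non-trivial contraction $f_i\otimes_r f_i$ (with $1\leq r\leq q-1$) splits the exponent $q$ into two strictly subcritical factors and, after normalisation by $g^4(n_{2i}-n_{2i-1})$, produces a variance of order $1/\log^2(n_{2i}-n_{2i-1})$. For the off-diagonal entries, the super-polynomial spacing $n_{2i-1}-n_{2i-2} \gtrsim q^{(m+i)^{1+\alpha}}$ forces the relevant contractions to involve only $\rho$-values with arguments astronomically larger than the block sizes themselves, and the required bound follows immediately from $\sum_k|\rho(k)|^s<\infty$ for every $s>q$.

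The main obstacle lies in the sharp logarithmic bookkeeping for the diagonal variance: since $2H-2 = -1/q$ sits exactly on the borderline, a naïve summation of $\rho$-powers is off by a factor of $\log n$. One must verify that the $(q-1)$-th contraction (or equivalently the sum $\sum_{k,\ell} \rho(k-\ell)^{q-r}\rho(k-\ell)^r$ across a block of length $N$) grows exactly like $N\log N$ rather than $N(\log N)^2$, so that after normalisation by $g^4(n)\sim (n\log n)^2$ one recovers the sharp $O(1/\log^2 n)$ decay demanded by \eqref{varGamma1}. Once these deterministic kernel estimates are in place, Theorem \ref{mainthm0} delivers the claim with the announced constant $l$.
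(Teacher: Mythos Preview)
Your overall architecture matches the paper's: reduce to Proposition~\ref{p:checksm}, exploit the chaos representation \eqref{e:simplesm}, and use hypercontractivity to get \textbf{(A2)} and Assumption~(iii) for free. The variance computation for \textbf{(A1)} is also essentially the same, and the paper likewise extracts the constant $l$ from $\rho_H(k)^q\sim c_q/|k|$.

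Where you diverge is in the verification of \eqref{varGamma1}--\eqref{varcrossGamma1}. For the \emph{diagonal} term, the paper does not compute contraction norms directly; it simply quotes the known bound
\[
\sqrt{\E\Bigl[\bigl(1-\tfrac{1}{q\sigma_n^2}\|DX_n\|_\HH^2\bigr)^2\Bigr]}\le \frac{C}{\log n}
\]
from \cite[p.~146]{n-pe-book} and then corrects for the replacement of $\sigma_n$ by $g(n)$. Your contraction route is in principle viable, but your description of ``the sum $\sum_{k,\ell}\rho(k-\ell)^{q-r}\rho(k-\ell)^r$'' is not the right object: that expression collapses to $\sum\rho^q$, whereas the actual quantity controlling $\Var[\|DX_n\|_\HH^2]$ is the four-index sum $\sum_{k,k',\ell,\ell'}\rho(k-\ell)^r\rho(k'-\ell')^r\rho(k-k')^{q-r}\rho(\ell-\ell')^{q-r}$, whose analysis is precisely the content of the cited reference.

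The genuine gap is in your \emph{off-diagonal} argument. You assert that the bound ``follows immediately from $\sum_k|\rho(k)|^s<\infty$ for every $s>q$''. But at criticality $\rho(k)^q\sim c_q/|k|$, so the exponents that actually appear in $\langle DY_i,DY_j\rangle_\HH$ are all $\le q$, where summability \emph{fails}; in particular the constant part $\E[Y_iY_j]$ involves $\rho^q$ itself. The super-polynomial spacing makes the cross-block $\rho$-arguments large, but the within-block factors $\rho(k-k')^{q-r}$, $\rho(\ell-\ell')^{q-r}$ have small arguments and diverge under naive summation. The paper sidesteps all of this by invoking \cite[Lemma~6.2.1]{n-pe-book},
\[
\E\bigl[\langle DF,DG\rangle_\HH^2\bigr]\le C_q\Bigl(\E[FG]^2+\Var\bigl[\|DF\|_\HH^2\bigr]+\Var\bigl[\|DG\|_\HH^2\bigr]\Bigr),
\]
which reduces the off-diagonal estimate to the already-proved diagonal bounds plus a direct control of $|\E[Y_iY_j]|$. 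The latter is then obtained not by summability but by an explicit integral computation
\[
\sum_{k\in I_i}\sum_{\ell\in I_j}\frac{1}{\ell-k}\le \int_{I_i}\!\int_{I_j}\frac{dx\,dy}{y-x}\le (n_i-n_{i-1})\log(n_j-n_{i-1}),
\]
combined with the fact that $n_{j-1}/n_j\to 0$ along the subsequence $\nn$. This log-arithmetic is exactly what your proposal is missing.
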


\bigskip

{ The next section contains a number of preliminary results, that will be exploited in the proofs of our main findings.}

\section{Preliminaries}

In this section, we gather together several useful statements, that are needed in order to prove our main results.

\subsection{A result by Lai and Stout}

As anticipated in the Introduction, one of the key contributions of the present paper is a new technique, allowing one to deduce exact lower bounds in the LIL for possibly dependent sequences. For upper bounds, our principal tool will be a classical result by Lai and Stout \cite[Lemma 1]{LS1980}, that we reformulate in a way that is convenient for our discussion.

\begin{lma}[Lemma 1 in  \cite{LS1980}] \label{l:laistout} Let the sequence $X = \{X_n : n\geq 1\}$ verify Assumption {\bf (A1)} (for some appropriate mapping $g$) and Assumption {\bf (A2)}  of Section \ref{ss:genLIL}, and assume that the following two conditions hold:
\begin{itemize}

\item[\rm (a)] For every $0<\epsilon <1$, there exist $\epsilon',K>0$ such that, for $n, a\in \N$ large enough,

$$\P\left(\frac{X_{n+a}-X_a}{g(n)}\ge (1+\epsilon)\sqrt{2\log\log n}\right)\le \frac{K}{\log^{1+\epsilon'} n}.$$

\item[\rm (b)] There exist numbers $\theta,K' >0$  and $B>1$ such that, for $a$ and $n$ large enough,
$$\P\left(\frac{X_{n+a}-X_a}{g(n)}\ge x \sqrt{2\log\log n}\right)\le \frac{K'}{x^{\theta \log\log n} }, \quad x\geq B.$$
\end{itemize}
Then, with $\P$-probability one,
\begin{equation}\label{e:lsupper}
\limsup_{n\to\infty} \frac {|X_n|}{\sqrt{2g(n)^2\log\log n}} \leq1.
\end{equation}

\end{lma}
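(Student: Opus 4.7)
The plan is to recognise the statement as a reformulation of \cite[Lemma 1]{LS1980} and reduce the proof to that reference by verifying that Assumptions (A1), (A2) together with (a), (b) above match the hypotheses used there. For transparency, I would also sketch the underlying probabilistic reasoning, which follows the classical two-step scheme for LIL upper bounds: first establish the bound along a geometric subsequence using (a) and Borel--Cantelli, then control the fluctuations of $X$ between consecutive terms of that subsequence using (b) and (A2).

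For the subsequence step, I would fix $\rho>1$ arbitrarily close to $1$, set $n_k=\lfloor \rho^k\rfloor$, and apply (a) with $a=0$ to obtain, for every $\epsilon>0$,
$$
\P\!\left(\frac{X_{n_k}}{g(n_k)}\geq (1+\epsilon)\sqrt{2\log\log n_k}\right)\leq \frac{K}{(\log n_k)^{1+\epsilon'}}\leq \frac{K''}{k^{1+\epsilon'}}.
$$
Summability in $k$ and Borel--Cantelli give the one-sided LIL upper bound along $\{n_k\}$; the same argument applied to $-X$ yields the lower-sided bound along that subsequence. It then remains to control the block maximum $M_k:=\max_{n_k\leq n<n_{k+1}}|X_n-X_{n_k}|$ and to show that $M_k\leq \delta(\rho)\,g(n_k)\sqrt{2\log\log n_k}$ eventually, with $\delta(\rho)\to 0$ as $\rho\to 1^+$. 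Once this oscillation estimate is in place, the regular variation of $g$ from (A1) gives $g(n_{k+1})/g(n_k)\to 1$ and $\log\log n_{k+1}/\log\log n_k\to 1$ as $\rho\to 1^+$, permitting an interpolation from $\{n_k\}$ to arbitrary $n$ and yielding \eqref{e:lsupper} after letting $\rho\to 1^+$ and $\epsilon\to 0$.

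The main obstacle is exactly the oscillation estimate, since without independence the classical L\'evy or Kolmogorov maximal inequalities do not apply. The crucial ingredient will be assumption (b): rewriting $x^{-\theta\log\log n}=(\log n)^{-\theta\log x}$ reveals that, for fixed $x\geq B$, the decay in $n$ is polynomial in $\log n$ of arbitrarily high order, which permits a union bound over the $n_{k+1}-n_k$ points of a block when combined with a dyadic chaining, while (A2) will deliver the uniform moment control needed for short-range increments via Chebyshev's inequality. Carefully balancing the exponent $\theta\log\log n$ in (b) against the block cardinality is the technical heart of the argument, and is precisely what is handled in \cite[Lemma 1]{LS1980}.
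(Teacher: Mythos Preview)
Your approach is essentially the same as the paper's: the paper does not prove this lemma but simply cites \cite[Lemma 1]{LS1980} and adds a remark verifying that Assumption {\bf (A1)} forces $g$ to satisfy the growth conditions (1.1)--(1.2) of \cite{LS1980}, and that the constants $K,K'$ in (a), (b) may be taken arbitrary (rather than equal to $1$ as in the original). Your sketch of the underlying subsequence-plus-oscillation argument is correct in spirit and goes beyond what the paper provides; if you keep it, you should make explicit the two verification points just mentioned, since they are exactly what is needed to invoke the reference.
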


\begin{rmk}{\rm In view of the assumptions on the mapping $g$ appearing in {\bf (A1)}, one has always that
\begin{equation}\label{e:p1}
\liminf_{n\to\infty} g(K''n)/g(n)>1, \quad \forall K''>1
\end{equation}
and also that, for every $\epsilon>0$, there exists $\rho<1$ such that 
\begin{equation}\label{e:p2}
\limsup_{n\to\infty} \, \left\{\max_{\rho n \leq i\leq n} g(i)/g(n)\right\} \leq  1+\epsilon.
\end{equation}
Relations \eqref{e:p1}--\eqref{e:p2} imply, in particular, that the mapping $n\mapsto \E[X_n^2]$ automatically satisfies relations (1.1)--(1.2) in \cite{LS1980}, that in turn appear as explicit assumptions in the original statement of \cite[Lemma 1]{LS1980}. One should also notice that, in the statement of \cite[Lemma 1]{LS1980}, conditions (a) and (b) require that $K=K'=1$. It is immediately checked that the conclusion remains valid if one considers instead arbitrary finite constants $K,K'>0$.}
\end{rmk}

\subsection{Comparison of multivariate Kolmogorov and 1-Wasserstein distances}
The next result, which is of independent interest, is a crucial step in our approach. We emphasis that the logarithmic dependence on the dimension in the forthcoming estimate \eqref{comparison} is absolutely 
necessary for achieving the proof of our main results.
\begin{thm}\label{t:comparison}
Let $d\ge 1$ be an integer, let $\textbf{X}=(X_1,\cdots,X_d)$ be \textbf{any} random vector and let $\textbf{G}=(G_1,\cdots,G_d)$ be a Gaussian random vector with covariance identity. Then, one has  that
\begin{equation}\label{comparison}
d_{\KK}(\textbf{X},\textbf{G})\le 3 \log^\frac{1}{4}(d+1) \sqrt{W_1(\textbf{X},\textbf{G})}.
\end{equation}
\end{thm}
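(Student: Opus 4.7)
The strategy is to reduce the multidimensional Kolmogorov comparison to a one-dimensional Kolmogorov-versus-Wasserstein comparison, by encoding the event $\{{\bf X} \in Q\}$, with $Q = \prod_{i=1}^d (-\infty, a_i]$, via the one-dimensional functional
$$
F_{\bf a}({\bf x}) := \max_{1\le i\le d}(x_i - a_i).
$$
Two observations make this reduction possible: first, $\{{\bf x}\in Q\} = \{F_{\bf a}({\bf x})\le 0\}$; second, $F_{\bf a}$ is $1$-Lipschitz in the Euclidean norm, because for every ${\bf x}, {\bf y} \in \R^d$, picking $i^*$ to realize the maximum in $F_{\bf a}({\bf x})$ gives $F_{\bf a}({\bf x}) - F_{\bf a}({\bf y}) \le x_{i^*} - y_{i^*} \le \|{\bf x}-{\bf y}\|_\infty \le \|{\bf x}-{\bf y}\|_2$. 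By Kantorovich--Rubinstein duality, this $1$-Lipschitz property contracts the $1$-Wasserstein distance: $W_1(F_{\bf a}({\bf X}), F_{\bf a}({\bf G})) \le W_1({\bf X}, {\bf G})$.

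The heart of the proof is a Nazarov-type anti-concentration estimate for the maximum of shifted Gaussians: the law of $F_{\bf a}({\bf G}) = \max_{1\le i \le d}(G_i - a_i)$ admits a density $\rho_{\bf a}$ on $\R$ satisfying $\|\rho_{\bf a}\|_\infty \le C_0 \sqrt{\log(d+1)}$, uniformly in ${\bf a} \in \R^d$, for a universal constant $C_0$. Since ${\bf G}$ has i.i.d.\ standard Gaussian coordinates, one has the explicit formula
$$
\rho_{\bf a}(t) = \sum_{j=1}^d \phi(a_j+t)\prod_{i\ne j}\Phi(a_i+t),
$$
with $\phi$ and $\Phi$ the standard Gaussian density and distribution function; the bound follows by splitting the indices $j$ according to whether $a_j + t$ is of moderate size (where $\phi(a_j+t)$ is controlled by $1/\sqrt{2\pi}$ and the product factor decays like a large power of a CDF bounded away from $1$) or very negative (where $\phi(a_j+t)$ is exponentially small), combined with the Mills' ratio inequality $\phi(u)/\Phi(u) \le |u| + 1$.

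Armed with this density bound, it remains to invoke the classical one-dimensional Kolmogorov-versus-Wasserstein inequality: if a real random variable $Z$ has density bounded by $L$, then $d_{\KK}(Y, Z) \le 2\sqrt{L\cdot W_1(Y,Z)}$ for every real random variable $Y$, proved in one line by approximating $\mathbf{1}_{(-\infty, t]}$ from above and below by $1/\varepsilon$-Lipschitz ramps and optimizing in $\varepsilon$. Applied to $Y = F_{\bf a}({\bf X})$ and $Z = F_{\bf a}({\bf G})$, this yields
$$
\bigl|\P[{\bf X}\in Q] - \P[{\bf G}\in Q]\bigr| = \bigl|\P[F_{\bf a}({\bf X})\le 0] - \P[F_{\bf a}({\bf G})\le 0]\bigr| \le 2\sqrt{C_0\sqrt{\log(d+1)}\cdot W_1({\bf X},{\bf G})},
$$
and taking the supremum over ${\bf a}$ produces \eqref{comparison}, provided $C_0 \le 9/4$ so that the prefactor $2\sqrt{C_0}$ is bounded by $3$. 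The principal technical obstacle lies precisely here: the $\sqrt{\log d}$ order is intrinsic, being forced by the asymptotic behavior of the density of the Gaussian maximum, and the challenge is to track the universal constant in the anti-concentration estimate carefully enough to reach the explicit factor $3$ claimed in \eqref{comparison}.
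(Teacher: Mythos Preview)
Your reduction via the functional $F_{\bf a}({\bf x})=\max_i(x_i-a_i)$ is correct and is in fact a repackaging of exactly what the paper does. The paper couples ${\bf X}$ and ${\bf G}$ optimally, uses $\|\cdot\|_\infty\le\|\cdot\|_2$ to control $\P({\bf X}\le{\bf t})-\P({\bf G}\le{\bf t})$ by $\P({\bf G}\le{\bf t}+\epsilon{\bf 1})-\P({\bf G}\le{\bf t})+W_1/\epsilon$, bounds the first term by $\epsilon\cdot\theta_d$ where $\theta_d:=\sup_{{\bf t}}\sum_j\phi(t_j)\prod_{i\ne j}\Phi(t_i)$, and optimizes in $\epsilon$. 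Your density $\rho_{\bf a}(t)=\sum_j\phi(a_j+t)\prod_{i\ne j}\Phi(a_i+t)$ is precisely this $\theta_d$ after the change of variables $t_j=a_j+t$; and your one-dimensional Kolmogorov--Wasserstein lemma, unwound, is the same $\epsilon$-smoothing plus optimization. So the two proofs coincide at the level of inequalities.

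The one substantive difference is in how the density bound is obtained. You sketch an index-splitting argument with Mills' ratio and leave the constant $C_0\le 9/4$ as the ``principal technical obstacle.'' The paper instead proves $\theta_d\le\sqrt{2\log(d+1)}$ by a short recursion: freezing $t_2,\dots,t_d$ and maximizing in $t_1$ gives $\theta_d\le f(\theta_{d-1})$ with $f(x)=\phi(x)+x\Phi(x)$, and then one checks by induction that $f(\sqrt{2\log d})\le\sqrt{2\log(d+1)}$. This yields $C_0=\sqrt{2}$ directly, hence the final prefactor $2\cdot 2^{1/4}<3$, closing the gap you flagged. Your route is sound; the recursive argument is simply the cleanest way to nail down the constant.
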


\begin{proof}
{Without loss of generality, we can assume that ${\bf X}$ and ${\bf G}$ are defined on the same probability space, and also that $\E\big[\|\textbf{X}-\textbf{G}\|\big]=W_1(\textbf{X},\textbf{G})$}. Let $\textbf{t}=(t_1,\cdots,t_d)$. For convenience, we set $\{\textbf{X}\le \textbf{t}\}:=\{X_1\le t_1,\cdots,X_d\le t_d\}$ and $\{\textbf{G}\le \textbf{t}\}=\{G_1\le t_1,\cdots,G_d \le t_d\}$. Let us be given a positive parameter $\epsilon>0$. We have the following inequalities (where we set for simplicity $\|\textbf{x}\|_\infty:=\max_{i=1,\cdots,d}|x_i|$ and $\|\textbf{x}\| :=\|\textbf{x}\|_{\R^d}= \sqrt {x_1^2+\cdots +x_d^2}$):
\begin{eqnarray*}
&&\P(\textbf{X}\le\textbf{t})-\P(\textbf{G}\le\textbf{t})\\
&\le& \P\big(\textbf{X}\le \textbf{t},\|\textbf{X}-\textbf{G}\|_\infty\le\epsilon\big)-\P(\textbf{G}\le\textbf{t})+\frac{1}{\epsilon}\E\big[\|\textbf{X}-\textbf{G}\|_\infty\big]\\
&\le& \Big(\P(\textbf{G}\le \textbf{t}+(\epsilon,\cdots,\epsilon))-\P(\textbf{G}\le \textbf{t})\Big)+\frac{1}{\epsilon}\E\big[\|\textbf{X}-\textbf{G}\|\big]\\
&\le& \Big(\P(\textbf{G}\le \textbf{t}+(\epsilon,\cdots,\epsilon))-\P(\textbf{G}\le \textbf{t})\Big)+\frac{W_1(\textbf{X},\textbf{G})}{\epsilon}
\end{eqnarray*}
In order to estimate the first term, we set
$$\phi(x)=\P\big(\textbf{G}\le \textbf{t}+(x,\cdots,x)\big).$$
One has
$$|\phi(\epsilon)-\phi(0)|\le \sup_{x\in\R}|\phi'(x)|\epsilon.$$
Besides, one has
\begin{eqnarray*}
\phi'(x)&=&\frac{d}{dx}\left(\prod_{i=1}^d\int^{t_i+x}_{-\infty} e^{-\frac{u^2}{2}}\frac{du}{\sqrt{2\pi}}\right)\\
&=&\frac{1}{(2\pi)^\frac{d}{2}}\sum_{i=1}^d e^{-\frac{(t_i+x)^2}{2}}\prod_{j\neq i}\int^{t_j+x}_{-\infty} e^{-\frac{u^2}{2}}du\\
&\le& \theta_d:=\sup_{\textbf{t}\in\R^d} \frac{1}{(2\pi)^\frac{d}{2}}\sum_{i=1}^d e^{-\frac{t_i^2}{2}}\prod_{j\neq i}\int^{t_j}_{-\infty} e^{-\frac{u^2}{2}}du 
\end{eqnarray*}
To estimate $\theta_d$ we follow an iterative scheme. Namely, one has
\begin{eqnarray*}
\theta_d\le \sup_{t_1\in\R}\left(\frac{1}{\sqrt{2\pi}}e^{-\frac{t_1^2}{2}}+\int_{-\infty}^{t_1} e^{-\frac{u^2}{2}}\frac{du}{\sqrt{2\pi}} \theta_{d-1}\right).
\end{eqnarray*}
We are left to estimate the maximum of the next univariate function
$$h(t_1)=\frac{1}{\sqrt{2\pi}}e^{-\frac{t_1^2}{2}}+\int_{-\infty}^{t_1} e^{-\frac{u^2}{2}}\frac{du}{\sqrt{2\pi}} \theta_{d-1}.$$
We have
\begin{eqnarray*}
h'(t_1)=\left(\theta_{d-1}-t_1\right)\frac{e^{-\frac{t_1^2}{2}}}{\sqrt{2\pi}},
\end{eqnarray*}
implying that the maximum of $h$ is reached when $t_1=\theta_{d-1}$. From theses facts, we obtain the following recursion.
$$\theta_d\le \frac{1}{\sqrt{2\pi}}e^{-\frac{\theta_{d-1}^2}{2}}+\theta_{d-1}\int_{-\infty}^{\theta_{d-1}}e^{-\frac{u^2}{2}}\frac{du}{\sqrt{2\pi}}:=f(\theta_{d-1}).$$
We will now show that the previous inequality entails that $\theta_d\le \sqrt{2\log (d+1)}$. We proceed with induction on $d$. When $d=1$, one has $\theta_1\le \frac{1}{\sqrt{2\pi}}\le \sqrt{2\log 2}$. Let us assume now that $d\ge 2$, a straightforward computation implies that $f$ is increasing. Therefore,
\begin{eqnarray*}
\theta_{d+1}&\le& f(\theta_d)\le f(\sqrt{2\log(d+1)})\\
&=&\frac{1}{\sqrt{2\pi}}\frac{1}{d+1}+\sqrt{2 \log(d+1)}\int_{-\infty}^{\sqrt{2\log(d+1)}}e^{-\frac{u^2}{2}}\frac{du}{\sqrt{2\pi}}\\
&\le&\frac{1}{\sqrt{2\pi}}\frac{1}{d+1}+\sqrt{2 \log(d+1)}\Big(1-\frac{1}{(d+1)\sqrt{2\log(d+1)}}\Big)\\
&\le&\sqrt{2\log(d+1)}+(\frac{1}{\sqrt{2\pi}}-1)\frac{1}{d+1}\\
&\le&\sqrt{2\log(d+2)}.\\
\end{eqnarray*}
The same strategy can be implemented to deduce an analogous bound for $\P(\textbf{G}\le\textbf{t})-\P(\textbf{X}\le\textbf{t})$. Putting these facts together, we have showed that, for every $\epsilon>0$,
$$d_{\KK}(\textbf{X},\textbf{G})\le \epsilon \sqrt{2\log(d+1)}+\frac{1}{\epsilon}W_1(\textbf{X},\textbf{G}).$$
A standard argument of optimization implies the desired bound.
\end{proof}

When $d=1$, one recovers from \eqref{comparison} the inequality $d_\KK(X,G) \leq c \sqrt{W_1(X,G)}$, where $c = 3(\log 2)^{1/4} \approx 2.737$. This estimate is slightly worse than the usual bound $d_\KK(X,G) \leq 2 \sqrt{W_1(X,G)}$, see e.g. \cite[formula (C.2.6)]{n-pe-book} and the references therein.

\subsection{Bounds on Wasserstein and Kolmogorov distances in terms of Stein matrices}

{ The following statement shows how Stein's matrices can be directly put into use, in order to asses normal approximations (both in the sense of the Wasserstein and Kolmogorov distances).} Part (a) corresponds to Proposition 3.4 in \cite{HSI}, while Part (b) follows from a standard application of the one-dimensional Stein's method (see e.g. \cite[Chapters 3 and 5]{n-pe-book}).

\begin{prop}\label{p:hsi} Fix an integer $d\geq 1$, as well as $\theta\in [1,\infty)$. Let ${\bf X} = (X_1,..., X_d)$ be any centered random vector whose entries have moments of order $\theta$, and let ${\bf G} = (G_1,..., G_d)$ be a centered standard Gaussian vector. Assume that ${\bf X}$ has a Stein matrix $\tau({\bf X})$ (in the sense of Definition \ref{d:sm}).
\begin{itemize}

\item[\rm (a)] If the entries of $\tau(\bf X)$ have finite moments of order $\theta$, then
\begin{equation}\label{e:wassb}
W_\theta ({\bf X}, {\bf G}) \leq D(d,\theta) \left( \sum_{i,j=1}^d \E\left| \tau_{i,j}({\bf X}) - \delta_{ij} \right|^\theta \right)^{\frac{1}{\theta}},
\end{equation}
where $\delta_{i,j}$ is the Kronecker symbol, and $D(d,\theta) := c_\theta d^{1-1/\theta}$ if $\theta \in [1,2)$, and $D(d,\theta) := c_\theta d^{1-2/\theta}$ if $\theta \geq 2$, with $c_\theta :=  (\E|G_1|^\theta) ^{1/\theta}$. 

\item[\rm (b)] If $d=1$, and therefore ${\bf X} = X$, ${\bf G} = G$ and $\tau(X)$ are one-dimensional random variables,
\begin{equation}\label{e:kb} 
d_{\KK} (X,G) \leq \E\left| \tau(X) - 1\right|.
\end{equation} 
\end{itemize}
\end{prop}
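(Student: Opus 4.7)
Since part (a) is exactly Proposition 3.4 of \cite{HSI}, I would simply invoke that reference. To indicate how it is proved there: by Kantorovich--Rubinstein duality the $W_\theta$ distance is controlled by $\sup_h |\E h(\mathbf{X}) - \E h(\mathbf{G})|$ over a suitable class of smooth test functions $h$; solving the multivariate Ornstein--Uhlenbeck Stein equation $\Delta f(\mathbf{x}) - \langle \mathbf{x}, \nabla f(\mathbf{x})\rangle = h(\mathbf{x}) - \E[h(\mathbf{G})]$ via the semigroup formula $f = -\int_0^\infty (P_t h - \E[h(\mathbf{G})])\, dt$ and applying \eqref{eq:26} with $g = \partial_j f$ coordinatewise yields
$$
\E[h(\mathbf{X})] - \E[h(\mathbf{G})] = \sum_{i,j=1}^d \E\bigl[(\delta_{ij} - \tau_{i,j}(\mathbf{X}))\, \partial^2_{ij} f(\mathbf{X})\bigr].
$$
A Hölder split of this double sum with exponents $\theta$ and $\theta/(\theta-1)$, followed by Meyer-type $L^{\theta/(\theta-1)}$ bounds on $\partial^2_{ij} f$ in terms of the regularity of $h$, delivers the constant $D(d,\theta)$; the improvement from $d^{1-1/\theta}$ to $d^{1-2/\theta}$ at $\theta \geq 2$ comes from using the $\ell^2$ rather than $\ell^1$ norm on the index set $\{(i,j)\}$ in the Hölder step.

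For part (b), I would implement the classical one-dimensional Stein's method. For each $a \in \R$, let $f_a$ denote the bounded Lipschitz solution of the Stein equation $f_a'(x) - x f_a(x) = \mathbf{1}_{(-\infty, a]}(x) - \P(G \le a)$, for which the standard explicit formula gives $\|f_a\|_\infty < \infty$ and $\|f_a'\|_\infty \leq 1$ (see e.g. \cite[Lemma 3.1.2]{n-pe-book}). After a routine smoothing step --- approximating $\mathbf{1}_{(-\infty,a]}$ by $C^\infty$ functions with polynomial growth, applying \eqref{eq:26}, and passing to the limit via dominated convergence (justified by the uniform bounds on the approximating Stein solutions) --- one validates the identity $\E[X f_a(X)] = \E[\tau(X) f_a'(X)]$. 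Substituting this into the Stein equation evaluated at $X$ yields
$$
\P(X \le a) - \P(G \le a) = \E[f_a'(X) - X f_a(X)] = \E\bigl[(1 - \tau(X))\, f_a'(X)\bigr].
$$
Taking absolute values, using $|f_a'| \leq 1$, and taking the supremum over $a \in \R$ produces \eqref{e:kb}.

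No step presents a substantive obstacle: part (a) is entirely covered by the cited reference, and part (b) reduces to the textbook derivation of the Stein bound for the Kolmogorov distance. The only minor subtlety is the smoothing step in part (b), needed in order to legitimately use \eqref{eq:26} with the non-smooth Stein solution $f_a$.
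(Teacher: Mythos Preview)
Your proposal is correct and matches the paper's own treatment: the paper does not prove this proposition either, but simply states that part (a) is Proposition 3.4 in \cite{HSI} and that part (b) follows from a standard application of one-dimensional Stein's method (referring to \cite[Chapters 3 and 5]{n-pe-book}). Your write-up is in fact more detailed than the paper's; the only minor quibble is that your sketch of the argument in \cite{HSI} invokes Kantorovich--Rubinstein duality, which in its classical form applies to $W_1$ rather than general $W_\theta$ --- but since you explicitly defer to the reference for the actual proof, this does not affect correctness.
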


\medskip

{ In order to bound the 1-Wasserstein distance, we will actually need a slightly different bound, proven e.g. by means of Stein's method and of a slight modification of the arguments used in the proof of \cite[Proposition 3.5]{M-S-W}: under the assumptions of Proposition \ref{p:hsi}, and assuming the entries of of $\tau({\bf X})$ are square-integrable,
\begin{equation}\label{e:hg}
W_1({\bf X}, {\bf G}) \leq \sqrt{ \sum_{i,j=1}^d \E[(\tau_{i,j}({\bf X}) - \delta_{ij})^2] }.
\end{equation}
}
%We will use the following result which bounds the multivariate 1-Wassertstein's distance in terms of the Stein's matrix. One can consult Theorem 3.5 in \cite{M-S-W} or \cite{HSI} for two differents proofs.
%\begin{thm}
%Let $d\ge 1$ and $\textbf{X}=(X_1,\cdots,X_d)$ a centered random vector with distribution $\mu$. We assume that $\textbf{X}$ admits a Stein's matrix. That means that there exists 
%$$\textbf{x}\to \tau_\mu(\textbf{x})=\{\tau^{i,j}_\mu(\textbf{x}): i,j=1,\cdots,d\}$$
%a measurable matrix valued map on $\R^d$ in $L^2(\mu)$ such that for any $\phi\in\mathcal{C}^\infty_c(\R^d,\R)$ we have the vectorial equality
%$$\int_{\R^d}\phi(\textbf{x}) \,\textbf{x}  \,d\mu=\int_{\R^d}\tau_\mu(\textbf{x})\cdot \nabla \phi(\textbf{x}) d\mu.$$
%Let us be given a Gaussian vector $\textbf{G}$ with covariance $C$. Then one has
%\begin{equation}\label{MSW}
%d_{\WW}(\textbf{X},\textbf{G})\le \rho(C^{-1})\sqrt{\rho(C)} \sqrt{\sum_{i,j=1}^d \int_{\R^d}\left(C(i,j)-\tau_\mu^{i,j}(\textbf{x})\right)^2d\mu}.
%\end{equation}
%Here $\rho(\cdot)$ stands for the spectral radius of the matrix, which is also the operator norm.
%\end{thm}

\section{Proofs of the main theoretical results}

\subsection{Proof of Theorem \ref{mainthm0}}

\subsubsection{Proof of the upper bound}
Let $g(n)$ be the mapping appearing in Assumption {\bf (A1)}. We shall prove that, under {\bf (A3)}, both Conditions (a) and (b) in the statement of Lemma \ref{l:laistout} are verified, thus implying that the asymptotic upper bound \eqref{e:lsupper} holds with probability one. 

\smallskip

\noindent{\it Verification of Condition} (a). Fix integers $a,n$ such that $2\log\log n> 1$, as well as a real number $p\geq 1$. In view of Assumption {\bf (A3)}, there exists on some auxiliary probability space a coupling $(U,V)$ such that
\begin{eqnarray}
U&\stackrel{\text{law}}{=}&\frac{X_{n+a}-X_a}{g(n)}\notag \\
V&\stackrel{\text{law}}{=} &\mathcal{N}(0,1)\notag\\
\E\left[|U-V|^{2p}\right]&\le& (2p-1)^{p} \left(C \frac{(2p)^\lambda}{1+\log n}\right)^{2p}.\label{e:zaz}
\end{eqnarray}
The Markov inequality yields therefore that, for every $\epsilon\in (0,1)$,
\begin{eqnarray*}
&&\P\left(\frac{X_{n+a}-X_a}{g(n)}\ge (1+\epsilon)\sqrt{2\log\log n}\right)\\
&&\le \P\left(V\ge (1+\frac{\epsilon}{2})\sqrt{2\log\log n}\right)\\
&&+\P\left(|U-V|>\frac{\epsilon}{2}\sqrt{2 \log\log n}\right)\\
&&\le \frac{1}{\log^{(1+\frac{\epsilon}{2})^2} n}+\frac{2^{2p}}{\epsilon^{2p}}\E\left[|U-V|^{2p}\right],\\
\end{eqnarray*}
where we have used the basic estimate $\P[V\geq c]\leq e^{-c^2/2}$, for every $c>1$. Since, the previous bound is valid for any $p$, one can choose $2p=\log\log n$. We now claim that, for $n$ sufficiently large,
$$\frac{2^{\log\log n}}{\epsilon^{\log\log n}} \left(\log\log n\right)^{\log\log n}\left(C \frac{(\log \log n)^\lambda}{1+\log n}\right)^{\log \log n}\le \frac{1}{\log^{(1+\frac{\epsilon}{2}}n)^2} .$$
To see this, just observe that the logarithm of the left hand side of the previous expression is given by
\begin{eqnarray*}
&&\log\log n \times \Big(\log\frac{2}{\epsilon}+\log\log\log n+\\
&&\quad\quad\quad\quad\quad\quad \log C+\lambda \log\log\log n \Big)-\left(\log\log n\right)^2 \sim -\left(\log\log n\right)^2,
\end{eqnarray*}
whereas the logarithm of the right hand side is given by
$$-\left(1+\frac{\epsilon}{2}\right)^2\log\log(n).$$
In view of these relations, we conclude immediately that Condition (a) is verified (for some appropriate $K\geq 1$, by choosing $\epsilon'=(1+\frac{\epsilon}{2})^2-1$ for some $0<\epsilon<2(\sqrt{2}-1)$.
\smallskip

\noindent{\it Verification of Condition} (b). If $n$ is such that $2\log\log n>1$, the same coupling strategy as above yields the bound: for every $x>1$
\begin{eqnarray*}
&&\P\!\left(\!\frac{X_{n+a}-X_a}{g(n)}\!\ge\! x\sqrt{2\log\log n}\right)\!\le\! \frac{1}{\log^{\left( \frac{x}{2}\right)^2} n}\!+\!\frac{2^{2p}}{x^{2p}}\E\left[|U-V|^{2p}\right],
\end{eqnarray*}
where $p\geq 1$ is arbitrary and the coupling $(U,V)$ verifies the bound \eqref{e:zaz}. We now choose $p=2^{-1}\log\log n$, and we shall verify that each of the summands on the right-hand side of the previous inequality is less than $ \frac{1}{x^{ \log\log n} }$ for $n$ large enough. The logarithm of the first summand is $-\left(\frac{x}{2}\right)^2\log\log n$, which is less than $- \log \log n\log x$ for every $x>0$. On the other hand, the logarithm of the second summand is 
\begin{eqnarray*}
&& \log\log n \Big(\log 2+\log\log\log n+\log C\\
&&\quad\quad\quad\quad\quad\quad+\lambda \log\log\log n \Big)-\left(\log\log n\right)^2-\log\log n \log x. 
\end{eqnarray*}
which also verifies the desired inequality, since
$$\log\log n \left(\log 2+\log\log\log n+\log C+\lambda \log\log\log n \right)-\left(\log\log n\right)^2<0.$$
The above computations show that Condition (b) is verified for $B=\theta =1$, and some appropriate $K'\geq 2$. 

 \subsection{Proof of the lower bound}

Let $q>1$ be the real number appearing in Assumption {\bf (A4)}. For any $\epsilon>0$, we select a strictly positive number $\alpha_\epsilon>0$ in such a way that the following condition holds:
\begin{equation}\label{condition}
(1+\alpha_\epsilon)(1-\epsilon)^2<1.
\end{equation}
We need some further notation. Let $d\ge 1$, and let $p=1,2,...$ be an arbitrary integer. We define the $d$-dimensional vector:
\begin{eqnarray*}
&& \textbf{Z}_{p,d}=\left(\frac{X_{q^{{(2p+2)}^{1+\alpha_\epsilon}}}-X_{q^{{(2p+1)}^{1+\alpha_\epsilon}}}}{g\left(q^{{(2p+2)}^{1+\alpha_\epsilon}}-q^{{(2p+1)}^{1+\alpha_\epsilon}}\right)},\right. \\
&&\quad\quad\quad\quad\quad\quad\quad\quad \left.\cdots,\frac{X_{q^{{(2p+2d)}^{1+\alpha_\epsilon}}}-X_{q^{{(2p+2d-1)}^{1+\alpha_\epsilon}}}}{g\left(q^{{(2p+2d)}^{1+\alpha_\epsilon}}-q^{{(2p+2d-1)}^{1+\alpha_\epsilon}}\right)}\right).
\end{eqnarray*}
We also write
\begin{eqnarray*}
&& A_p=\left\{\frac{X_{q^{{(2p+2)}^{1+\alpha_\epsilon}}}-X_{q^{{(2p+1)}^{1+\alpha_\epsilon}}}}{g\left(q^{{(2p+2)}^{1+\alpha_\epsilon}}-q^{{(2p+1)}^{1+\alpha_\epsilon}}\right)}\ge(1-\epsilon)\times \right.\\
&&\quad\quad\quad\quad\quad\quad\quad\quad \left. \times\sqrt{2\log\log\left(q^{{(2p+2)}^{1+\alpha_\epsilon}}-q^{{(2p+1)}^{1+\alpha_\epsilon}}\right)}\right\}.
\end{eqnarray*}
We consider a sequence of i.i.d. standard Gaussian random variables $\{G_i : i\geq 1\}$, and define
$$\textbf{G}_{p,d}=\left(G_p,\cdots,G_{p+d-1}\right).$$
Finally, we introduce the set
$$
B_p=\left\{G_p\ge (1-\epsilon)\sqrt{2\log\log\left(q^{{(2p+2)}^{1+\alpha_\epsilon}}-q^{{(2p+1)}^{1+\alpha_\epsilon}}\right)}\right\}.$$
We shall now prove that $A_p$ {\it is realized infinitely often with $\P$-probability one}. This is indeed the most difficult part of the proof. Indeed, because of lack of independence of the increments of $\{X_n\}_n$, one can not simply use the Borel-Cantelli Lemma. However, the assumption \textbf{(A4)} expresses the fact that, at the particular scale $q^{p^{1+\alpha_\epsilon}}$, the increments become 
sufficiently decorrelated to get the desired result. In order to prove it, we need to translate the amount of information contained in \textbf{(A4)} in terms of Kolmogorov distance between the vector of 
increments and a Gaussian target. This delicate procedure will rely on Theorem \ref{comparison} and Proposition \ref{p:hsi}. We are therefore naturally led to write the following estimates (where 
$C_{q,\epsilon}$ is a constant which only depends on $(q,\epsilon)$ and that may change from line to line):
\begin{eqnarray*}
&& \left|\P\left(\bigcap_{i=p}^{p+d-1} A_i ^c\right)-\P\left(\bigcap_{i=p}^{p+d-1} B_i ^c\right)\right|\\
&& \le d_{\KK}\Big(\textbf{Z}_{p,d},\textbf{G}_{p,d}\Big)\\
&&\le 3\log^\frac{1}{4}(d+1) \sqrt{W_1\Big(\textbf{Z}_{p,d},\textbf{G}_{p,d}\Big)}\quad (\text{by using}\,\,\eqref{comparison})\\
&&\le \frac{C_{q,\epsilon}\log^\frac{1}{4}(d+1)\sqrt{d}}{\sqrt{1+\log\left(q^{{(2p+2)}^{1+\alpha_\epsilon}}-q^{{(2p+1)}^{1+\alpha_\epsilon}}\right)}} \quad(\text{by using}\,\, \textbf{(A4)}).
\end{eqnarray*}
On the other hand, exploiting the independence of the events $B_i$,
\begin{eqnarray*}
\log \P\left(\bigcap_{i=p}^{p+d-1} B_i ^c\right)&=&\sum_{i=p}^{p+d-1}\log \Big(1-\P(B_i)\Big)\\
&\leq&-\sum_{i=p}^{p+d-1}\P(B_i)\\
&\leq&-C_{q,\epsilon}\sum_{i=p}^{p+d-1} \frac{1}{i^{(1+\alpha_\epsilon)(1-\epsilon)^2}}\frac{1}{\sqrt{\log i}}\\
&\leq&-C_{q,\epsilon} \int_p^{p+d-1} \frac{dx}{x^{(1+\alpha_\epsilon)(1-\epsilon)^2}\sqrt{\log x}},
\end{eqnarray*}
where we have used that, if $G\stackrel{\text{law}}{=} \mathcal{N}(0,1)$, then $\P(G>x)>\frac{e^{-{\frac{x^2}{2}}}}{x}.$
Now we choose $\eta>0$ such that $1-\eta>(1+\alpha_\epsilon)(1-\epsilon)^2$. The existence of $\eta$ is indeed supported by the condition \ref{condition}. We have
$$\int_p^{p+d-1} \frac{dx}{x^{(1+\alpha_\epsilon)(1-\epsilon)^2}\sqrt{\log x}}>C_{q,\epsilon}\int_p^{p+d-1} \frac{dx}{x^{1-\eta}}=C_{q,\epsilon}\Big((p+d-1)^\eta-p^\eta\Big).$$
This implies that
$$\log \P\left(\bigcap_{i=p}^{p+d-1} B_i ^c\right)\le -C_{q,\epsilon}\Big((p+d-1)^\eta-p^\eta\Big).$$
Therefore,
\begin{eqnarray*}
\P\left(\bigcap_{i=p}^{p+d-1} A_i ^c\right) &\le& e^{-C_{q,\epsilon}\Big((p+d-1)^\eta-p^\eta\Big)}\\
&&\quad\quad\quad+ \frac{C_{q,\epsilon}\log^\frac{1}{4}(d+1)\sqrt{d}}{\sqrt{1+\log\left(q^{{(2p+2)}^{1+\alpha_\epsilon}}-q^{{(2p+1)}^{1+\alpha_\epsilon}}\right)}}.
\end{eqnarray*}
We can now  take $d=p^x$: if $x>1$ then the first term in the right hand side of the above inequality tends to zero as $p$ tends to infinity. To deal with the second term, we infer that
$$\frac{C_{q,\epsilon}\log^\frac{1}{4}(d+1)\sqrt{d}}{\sqrt{1+\log\left(q^{{(2p+2)}^{1+\alpha_\epsilon}}-q^{{(2p+1)}^{1+\alpha_\epsilon}}\right)}}< C_{q,\epsilon,x} \frac{p^{\frac{x}{2}}}{p^{\frac{1+\alpha_\epsilon}{2}}}\log^{\frac{1}{4}}p.$$
This term goes to zero when $p$ tends to infinity if $x<1+\alpha_\epsilon$. As a matter of fact, for any $1<x<1+\alpha_\epsilon$, we have shown that
$$\lim_{p\to\infty}\P\left(\bigcap_{i=p}^{p+p^x-1} A_i ^c\right)=0.$$
The fact claimed above, namely that $A_p$ is realized infinitely often with probability one, follows at once from the observation that for all $k\ge 1$ 
$$\P\left(\bigcap_{i=k}^{\infty} A_i ^c\right)\le \lim_{p\to\infty}\P\left(\bigcap_{i=p}^{p+p^x-1} A_p ^c\right)=0.$$
We now proceed towards the end of the proof. Recall that we have shown that, almost surely, one has infinitely often that
\begin{eqnarray}\label{i.o.}
&&\frac{X_{q^{{(2p+2)}^{1+\alpha_\epsilon}}}-X_{q^{{(2p+1)}^{1+\alpha_\epsilon}}}}{g\left(q^{{(2p+2)}^{1+\alpha_\epsilon}}-q^{{(2p+1)}^{1+\alpha_\epsilon}}\right)}\\ 
\notag&&\quad\quad\quad \ge (1-\epsilon)\sqrt{2\log\log\left(q^{{(2p+2)}^{1+\alpha_\epsilon}}-q^{{(2p+1)}^{1+\alpha_\epsilon}}\right)}.
\end{eqnarray}
For simplicity, we set $\psi(t)=g(t)\sqrt{2\log\log t}$. First, we will prove that, for any $\alpha>0$, one has that
$$\sum_{k=0}^\infty \P\Big(|X_{q^{k^{1+\alpha}}}|> \psi\big(q^{k^{1+\alpha}}\big)\Big)<\infty.$$
To accomplish this task, we use Assumption $(\textbf{A3})$  to deduce that
\begin{equation}\label{Steinestimate}
 d_{\KK}\left(\frac{X_{q^{k^{1+\alpha}}}}{g(q^{k^{1+\alpha}})},G\right)\le \frac{C}{k^{1+\alpha}}.
\end{equation}
By the triangle inequality and inequality (\ref{Steinestimate}), we get
\begin{equation*}
\sum_{k=1}^\infty \P(A_{k})<C_{q,\alpha} \sum_{k=1}^\infty\frac{1}{k^{1+\alpha}}+\sum_{k=1}^\infty \P\left(G>\sqrt{2 \log\log q^{k^{1+\alpha}}}\right).
\end{equation*}
Hence, since $1+\alpha>1$, the first sum converges. So, one is left to show that the second sum converges as well. Indeed, using the bound $\P(G>t)\le e^{-\frac{t^2}{2}}$ for $t\geq 1$, we have that, for $k$ large enough,
\begin{equation*}
\P\left(G>\sqrt{2 \log\log q^{k^{1+\alpha}}}\right)\le e^{-\log(\log(q^{k^{1+\alpha}}))}\le \frac{C_{q,\alpha}}{k^{1+\alpha}}.
\end{equation*}
By virtue of the first Borel-Cantelli Lemma, we can now infer that, for $p$ large enough, one has
$$
X_{q^{{(2p)}^{1+\alpha_\epsilon}}}\ge - \psi\left(q^{{(2p)}^{1+\alpha_\epsilon}}\right).
$$
Coming back to (\ref{i.o.}), we deduce that almost surely we have infinitely often
$$
X_{q^{{(2p+2)}^{1+\alpha_\epsilon}}}\ge (1-\epsilon) \psi\left(q^{{(2p+2)}^{1+\alpha_\epsilon}}-q^{{(2p+1)}^{1+\alpha_\epsilon}}\right)-\psi\left(q^{{(2p+1)}^{1+\alpha_\epsilon}}\right).
$$
Therefore, almost surely,
\begin{eqnarray*}
&& \limsup_{p\to\infty}\frac{X_{q^{{(2p+2)}^{1+\alpha_\epsilon}}}}{\psi\left(q^{{(2p+2)}^{1+\alpha_\epsilon}}\right)}\\
&&\ge \lim_{p\to\infty} \left((1-\epsilon) \frac{ \psi\left(q^{{(2p+2)}^{1+\alpha_\epsilon}}-q^{{(2p+1)}^{1+\alpha_\epsilon}}\right)}{\psi\left(q^{{(2p+2)}^{1+\alpha_\epsilon}}\right)}-\frac{\psi\left(q^{{(2p+1)}^{1+\alpha_\epsilon}}\right)}{\psi\left(q^{{(2p+2)}^{1+\alpha_\epsilon}}\right)}\right)\\
&&=(1-\epsilon).
\end{eqnarray*}
To obtain the last equality, we have used the fact that 
\begin{eqnarray*}
\lim_{p\to\infty}\frac{\psi\left(q^{{(2p+2)}^{1+\alpha_\epsilon}}-q^{{(2p+1)}^{1+\alpha_\epsilon}}\right)}{\psi\left(q^{{(2p+2)}^{1+\alpha_\epsilon}}\right)}&=&1,\\
\lim_{p\to\infty}\frac{\psi\left(q^{{(2p+1)}^{1+\alpha_\epsilon}}\right)}{\psi\left(q^{{(2p+2)}^{1+\alpha_\epsilon}}\right)}&=&0,
\end{eqnarray*}
which can be easily deduced from the \textit{Karamata integral representation} of the slowly varying function $L$ (see e.g. \cite[p. 14]{Bingbook}).

\subsection{Proof of Proposition \ref{p:checksm}}

We have to check that, under the assumptions in the statement, the three conditions {\bf (A2)}, {\bf (A3)} and {\bf (A4)} are verified.

\smallskip

\noindent{\it Proof of {\bf (A2)}}.  Fix $n,a\in \N$, and let $\tau$ indicate the Stein factor of the random variable $(X_{n+a}- X_a)/g(n)$ (that exists by virtue of (i)). According to the definition of $\tau$, one has that
\begin{eqnarray*}
&&\E\left[\left(\frac{X_{n+a}-X_a}{g(n)}\right)^{2p}\right]\\
&&=(2p-1)\E\left[\left(\frac{X_{n+a}-X_a}{g(n)}\right)^{2p-2}\tau\left(\frac{X_{n+a}-X_a}{g(n)}\right)\right]\\
&&\le(2p-1)\E\left[\left(\frac{X_{n+a}-X_a}{g(n)}\right)^{2p}\right]^\frac{p-1}{p}\E\left[\tau\left(\frac{X_{n+a}-X_a}{g(n)}\right)^p\right]^\frac{1}{p}.
\end{eqnarray*}
Applying Assumption (iii) in the statement, we therefore deduce that
\begin{eqnarray*}
\E\left[\left(\frac{X_{n+a}-X_a}{g(n)}\right)^{2p}\right] &\le&(2p-1)^p\, \E\left[\tau\left(\frac{X_{n+a}-X_a}{g(n)}\right)^p\right].\\
&\le& (2p-1)^p\, \left(1+C\frac{p^\lambda}{g(n)}\right)^p,
\end{eqnarray*}
thus yielding the desired conclusion.

\medskip

\noindent{\it Proof of {\bf (A3)}}. Fix $n_2>n_1$, and let $\tau$ indicate the Stein factor of the random variable $(X_{n_2}- X_{n_1})/g(n_2-n_1)$. According to Proposition \ref{p:hsi}-(a), one has that, for every $\theta\geq 1$
$$
W_\theta \left(\frac{X_{n_2} - X_{n_1}}{g(n_2-n_1)} , G \right) \leq  c_\theta\, \Big\Vert  \tau\big(\frac{X_{n_2}-X_{n_1}}{g(n_2-n_1)}\big) -1 \Big \Vert_{\theta},
$$
so that the desired estimate in the $\theta$-Wasserstein distance follows from \eqref{thetahypercontract}, as well as the bound $c_\theta \leq \alpha(\theta)$. The required one-dimensional bound in the Kolmogorov distance is an immediate consequence of \eqref{thetahypercontract} and \eqref{e:kb}.

\medskip

\noindent{\it Proof of {\bf (A4)}}. {The conclusion follows at once from \eqref{e:hg}, as well as Assumption (ii) in the statement. }

%Proposition \ref{p:hsi} implies that 
%\begin{equation}\label{e:wassb}
%W_1 ({\bf Y}_{\nn}, {\bf G}) \leq   \sum_{i,j=1}^d \E\left| \tau_{i,j}({\bf Y}) - \delta_{ij} \right|,
%\end{equation}
%where $\tau_{\nn} = \{\tau_{i,j} : i,j=1,...,p\}$ is the Stein matrix of ${\bf Y}_{\nn}$. 

\section{Proofs connected to applications}

In what follows we shall implicitly use the following elementary fact. Let $Z =\{Z_k : k\in \mathbb{Z}\}$ be a centered stationary Gaussian sequence. Then, it is a classical result (use e.g. the results discussed in \cite[Section 2.1]{n-pe-book}) that one can always find an isonormal Gaussian process $G=\{G(h) : h\in \HH\}$ such that the separable Hilbert space $\HH$ contains a sequence $\{h_k : k\in \mathbb{Z}\}$ having the property that $\{G(h_k) : k\in \mathbb{Z}\}$ has the same distribution as $Z$.

\subsection{Proof of Theorem \ref{ILLGaussian}}

We have to check that properties {\bf (A1)}  and (i), (ii) and (iii) in Proposition \ref{p:checksm} are verified. First of all we observe that, since the sequence $X$ is Gaussian, then every vector of the type $Y_{\nn}$ has a Stein matrix given by its own covariance. In view of this fact, it is immediate to check that all the required properties are verified, provided one can show that, for all $j\geq i$,
\begin{equation*}
\left|\frac{\E\left[(X_{n_{2i}}-X_{n_{2i-1}})(X_{n_{2j}}-X_{n_{2j-1}})\right]}{g(n_{2i}-n_{2i-1})g(n_{2j}-n_{2j-1})}\right|\le \frac{C}{1+\log(n_{2i} - n_{2i-1})}.
\end{equation*}
Now, in view of our assumptions, for all $n_{2i-1}\le k \le n_{2i}$,
\begin{eqnarray*}
\left|\sum_{l=n_{2j-1}-k}^{n_{2j}-k}r(l)\right|&\le& C(n_{2j}-n_{2j-1})^{2a-1}L(n_{2j}-n_{2j-1}).
\end{eqnarray*}
and also
\begin{eqnarray*}
&&\left|\frac{\E\left[(X_{n_{2i}}-X_{n_{2i-1}})(X_{n_{2j}}-X_{n_{2j-1}})\right]}{g(n_{2i}-n_{2i-1})g(n_{2j}-n_{2j-1})}\right|\\
&&=\left|\frac{\sum_{k=n_{2i-1}}^{n_{2i}}\sum_{l=n_{2j-1}}^{n_{2j}}r(l-k)}{g(n_{2i}-n_{2i-1})g(n_{2j}-n_{2j-1})}\right|\\
&&\le\frac{n_{2i}-n_{2i-1}}{g(n_{2i}-n_{2i-1})g(n_{2j}-n_{2j-1})}\max_{n_{2i-1}\le k \le n_{2i}}\left|\sum_{l=n_{2j-1}-k}^{n_{2j}-k}r(l)\right|.\\
&&\le\left(\frac{n_{2i}-n_{2i-1}}{n_{2j}-n_{2j-1}}\right)^{1-a}\frac{L(n_{2j}-n_{2j-1})}{L(n_{2i}-n_{2i-1})}.\\
&&\le C_\epsilon\left(\frac{n_{2i}-n_{2i-1}}{n_{2j}-n_{2j-1}}\right)^{1-a-\epsilon},\\
\end{eqnarray*}
where we have used the fact that $\frac{L(n)}{L(m)}\le C_\epsilon\left(\frac{n}{m}\right)^\epsilon$ for any $\epsilon>0$ (see \cite[Theorem 4.4]{LS1978}). Choosing $\epsilon$ small enough leads at once to the desired conclusion.

\subsection{Proofs of Theorem \ref{exam-noncritical} and Theorem \ref{exam-critical}}

For the sake of brevity, we will only focus on the more delicate case of Theorem \ref{exam-critical}, as the non-critical case can be treated in the same way (and is also proved in \cite[Proposition 1]{arcones}). We will check that Assumption  \textbf{(A1)} is verified, together with properties (i), (ii) and (iii) in the statement of Proposition \ref{p:checksm}. We adopt the same notations as in \ref{partialsum}, we set $H=1-\frac{1}{2q}$, and
$$l=\sqrt{2 q! \left(1-\frac{1}{q}\right)^q \left(1-\frac{1}{2q}\right)^q},$$
and we set
$$
g(n):=\sqrt{l n\log(n).}\\
$$
In view of the papers \cite{BM83,DM79,GS85}, it is well known that
$$
\frac{X_n}{g(n)}\xrightarrow[n\to\infty]{\text{law}}~\mathcal{N}(0,1).
$$

\noindent{\it Checking \textbf{(A1)}.} First, by using the stationarity of the increments of a fractional Brownian motion, we infer that
$$X_{n_2}-X_{n_1}\stackrel{\text{Law}}{=}X_{n_2-n_1}.$$
One immediately deduces that
$$\left|\E\Big[  \frac{ X_{n_2}-X_{n_1}}{g(n_2-n_1)} \Big]^2-1\right|=\left|\E\Big[  \frac{ X_{n_2-n_1}}{g(n_2-n_1)} \Big]^2-1\right|.$$
Now we observe that the covariance function $\rho_H$ of the Gaussian sequence 
$\{Z_k\}_{k\in \Z}$ is given by 
$$
\rho_H(k)=\frac 12\big(|k+1|^{2-1/q}-2|k|^{2-1/q}+|k-1|^{2-1/q}\big),
$$ 
and therefore verifies the following straightforward asymptotic relation: 
\begin{equation}
\label{eqDLrho}
\rho_H(k)^q=\left((1-\frac 1{2q})(1-\frac 1q)\right)^q |k|^{-1}+O(|k|^{-3}),\quad\mbox{as $|k|\to\infty$}.
\end{equation}
On the other hand, we have that
\begin{eqnarray*}
&&\E\left[\frac{X_n^2}{g(n)^2}\right]\\
&&=\frac{q!}{l^2 n\log n}\sum_{k,l=0}^{n-1}\rho^q_H(k-l)\\
&&=\frac{1}{\left((1-\frac 1{2q})(1-\frac 1q)\right)^q n\log n}\sum_{k=0}^{n-1}(n-k-1)\rho^q_H(k)\\
&&\!\!\stackrel{(\ref{eqDLrho})}{=}\frac{1}{n\log n}\sum_{k=1}^{n-1}(n-k-1) \frac{1}{k}+O\left(\frac{1}{n\log(n)}\sum_{k=1}^{n-1}(n-k-1)\frac{1}{k^3}\right)\\
&&\quad\quad+O\left(\frac{1}{\log n}\right)\\
&&= I_1+I_2+I_3.
\end{eqnarray*}
First, we notice that
$$I_2\le \frac{1}{\log n}\sum_{k=1}^{n-1}\frac{1}{k^3}=O\left(\frac{1}{\log n}\right).$$
As a consequence, we have only to show that
$$I_1=1+O\left(\frac{1}{\log n}\right).$$
To do so, we use the relations
\begin{eqnarray*}
I_1&=&\frac{1}{n\log n}\left((n-1)\sum_{k=1}^{n-1}\frac{1}{k}-(n-1)\right)\\
&=&\frac{1}{\log n}\sum_{k=1}^{n}\frac{1}{k}+O\left(\frac{1}{\log n}\right)\\
&=&\frac{1}{\log n}\left(\log n +\gamma +O\left(\frac{1}{n}\right)\right)+O\left(\frac{1}{\log n}\right)\\
&=&1+O\left(\frac{1}{\log n}\right),
\end{eqnarray*}
where $\gamma$ stands for the Euler-Mascheroni constant appearing in the asymptotic development of the harmonic series.

\medskip

\noindent{\it Checking {\rm (i)} in Proposition \ref{p:checksm}}.
A consequence of the previous discussion is that $X_n$ can be represented as a sequence of elements of the $q$-th Wiener chaos associated with some isonormal Gaussian process $G = \{G(h) : h\in \HH\}$. The existence of the required Stein matrices follows immediately from relation \eqref{e:simplesm}.

\medskip
\noindent{\it Checking {\rm (ii)} in Proposition \ref{p:checksm}}. Recall once again the explicit expression of the Stein matrix for chaotic random variables given in \eqref{e:simplesm}. Now, in \cite[p. 146]{n-pe-book} it is proved that, writing $\sigma_n^2=\E[X_n^2]$,
\begin{equation}\label{Gamma1isquitegood}
\sqrt{\E\left[\left(1-\frac {1}{q \sigma_n^2} \|D X_n\|_{\HH}^2\right)^2\right]}\le \frac{C}{\log n}.
\end{equation}
By the triangle inequality, we have
\begin{eqnarray*}
&&\sqrt{\E\left[\left(1-\frac {1}{q g(n)^2} \|D X_n\|_{H}^2\right)^2\right]}\\ &&\le
\frac{C}{\log n}+\left|1-\frac{\sigma_n^2}{g(n)^2}\right|\sqrt{\E\left[\left(\frac {1}{q \sigma_n^2} \|D X_n\|_{H}^2\right)^2\right]}.
\end{eqnarray*}
As a consequence, one infers that $\E\left[\left(\frac {1}{q \sigma_n^2} \|D X_n\|_{H}^2\right)^2\right]$ is a bounded sequence by hypercontractivity $(\ref{e:hc})$. Besides, we have showed in checking assumption \textbf{(A1)} that
$$\frac{\sigma_n^2}{g(n)^2}=\frac{1}{g(n)^2}\E[X_n^2]=1+O\left(\frac{1}{\log(n)}\right).$$
It follows that,
\begin{equation}\label{Gamma1isgood}
\sqrt{\E\left[\left(1-\frac {1}{q g(n)^2} \|D X_n\|_{H}^2\right)^2\right]}\le \frac{C}{\log(n)}.
\end{equation}
Making use of the stationarity of the $Z_k$, one can see that 
$$\|D X_n-D X_m\|_H^2\stackrel{\text{Law}}{=}\|D X_{n-m}\|_H^2.$$
This implies that (\ref{varGamma1}) is verified. In order to prove the assumption \textbf{(A4)} (\ref{varcrossGamma1}), we shall use \cite[p 120, Lemma 6.2.1]{n-pe-book}. This Lemma says that for two elements $F,G$ in the same Wiener chaos of order $q$, one has
$$\E\left[<DF,DG>_\HH^2\right]\le C_q\left(\E\left[FG\right]^2+\text{Var}\left[\|DF\|_\HH^2\right]+
\text{Var}\left[\|DG\|_\HH^2\right]\right).$$
We apply such an estimate to
\begin{eqnarray*}
F&=&\frac{X_{n_i}-X_{n_{i-1}}}{g(n_i-n_{i-1})}\\
G&=&\frac{X_{n_j}-X_{n_{j-1}}}{g(n_j-n_{j-1})}.
\end{eqnarray*}
Relying on equation (\ref{Gamma1isgood}), one is left to show that (if $i<j$)
$$ \Big|\E[FG]\Big|\le \frac {C}{1+\log(n_i-n_{i-1})}.$$
\begin{eqnarray*}
&&\Big|\E[FG]\Big|\le \frac{q!}{l^2 g(n_i-n_{i-1})g(n_j-n_{j-1})}\sum_{k=n_{i-1}}^{n_i}\sum_{l=n_{j-1}}^{n_j}|\rho_H(l-k)|^q\\
&&\le \frac{C}{g(n_i-n_{i-1})g(n_j-n_{j-1})}\sum_{k=n_{i-1}}^{n_i}\sum_{l=n_{j-1}}^{n_j}\frac{1}{(l-k)}\\
&&\le\frac{C}{g(n_i-n_{i-1})g(n_j-n_{j-1})}\int_{n_{i-1}}^{n_i}\int_{n_{j-1}}^{n_j}\frac{dx dy}{y-x}\\
&&=\frac{C}{g(n_i-n_{i-1})g(n_j-n_{j-1})}\int_{n_{i-1}}^{n_i}\Big(\log(n_j-x)-\log(n_{j-1}-x)\Big)dx\\
&&\le\frac{C}{g(n_i-n_{i-1})g(n_j-n_{j-1})}\int_{n_{i-1}}^{n_i}\log(n_j-x) dx\\
&&\le\frac{C}{g(n_i-n_{i-1})g(n_j-n_{j-1})}\log(n_j-n_{i-1})(n_i-n_{i-1})\\
&&\le\frac{C \log(n_j-n_{i-1})}{\log(n_i-n_{i-1})\log(n_j-n_{j-1})}\\
&&\le\frac{C}{\log(n_i-n_{i-1})} \frac{\log n_j +\log (1-\frac{n_{i-1}}{n_j})}{\log n_j +\log (1-\frac{n_{j-1}}{n_j})}\\
\end{eqnarray*}
Since when $i,j\to\infty$ we have both $\frac{n_{i-1}}{n_j}\to 0$ and $\frac{n_{j-1}}{n_j}\to 0$ we see that
$$\frac{\log n_j +\log (1-\frac{n_{i-1}}{n_j})}{\log n_j +\log (1-\frac{n_{j-1}}{n_j})}=\frac{1 +\frac{\log (1-\frac{n_{i-1}}{n_j})}{\log n_j}}{1 +\frac{\log (1-\frac{n_{j-1}}{n_j})}{\log n_j}}$$
is a bounded sequence which gives the desired bound.

\medskip

\noindent{\it Checking {\rm (iii)} in Proposition \ref{p:checksm}}. Such an assumption is a straightforward application of (\ref{Gamma1isgood}) and hypercontractivity $(\ref{e:hc})$.

\section*{Acknowledgements}

The authors thank Vincent Munnier for useful discussions to achieve the proof of Theorem \ref{t:comparison}.


\begin{thebibliography}{9}


\bibitem{arconesGaussian}
Arcones, M.A. (1995). On the law of the iterated logarithm for Gaussian processes. \emph{J. Theoret. Probab.} \textbf{8} , no. 4, 877-903. 


\bibitem{arcones}
Arcones, M. A. (1999). The law of the iterated logarithm over a stationary Gaussian sequence of random vectors.
\newblock \emph{J. Theoret. Probab.}, \textbf{12}, no. 3, 615-641.

\bibitem{Beran}
Beran, J. (1994). \newblock \emph{Statistics for long-memory processes}. Monographs on Statistics and Applied Probability, Vol. 61, Chapman and Hall, New York.

\bibitem{Bi11}
Bierm\'{e}, H., Bonami, A., Leon, J. R. (2011). Central limit theorems and quadratic variations in terms of spectral density. \newblock \emph{Electron. J. Probab}, \textbf{16}, no. 13, 362-395.

\bibitem{Bing}
Bingham, N. H. (1986). Variants on the law of the iterated logarithm. \emph{Bull. London Math. Soc.} \textbf{18}, no. 5, 433-467. 

\bibitem{Bingbook}
Bingham, N. H., Goldie, C. M., Teugels, J. L. (1989).\newblock \emph{Regular variation}. \textbf{ 27}, Cambridge university press.

\bibitem{BM83}
Breuer, P., Major, P. (1983). Central limit theorems for nonlinear functionals of Gaussian fields. \emph{J. Multivariate Anal.}, \textbf{13}, no. 3, 425-441.

\bibitem{BN09}
Breton, J.C., Nourdin, I. (2009). Error bounds on the non-normal approximation of Hermite power variations of fractional Brownian motion. \emph{Elec. Com. Prob.} \textbf{13}, 482-493.

\bibitem{Bro09}
Brockwell, P. J., Davis, R. A. (2009). \newblock \emph{Time series: theory and methods}. Springer-Verlag.

\bibitem{cha} S. Chatterjee (2012). A new approach to strong embeddings. \emph{Probab. Theory Related Fields}, {\bf 152}, 231-264.

\bibitem{dlpg} V. H. de la Pe\~{n}na and E. Gin\'e (1999). {\it Decoupling: From Dependence to Independence}. Springer-Verlag.

\bibitem{DEO74}
Deo, C.M.. (1974). A note on stationary gaussian sequences. \emph{Ann. Prob.}. \textbf{2}, no. 5, pp. 954-957.

\bibitem{DM79}
Dobrushin, R. L. , Major, P. (1979). Non-central limit theorems for nonlinear functionals of Gaussian fields. \emph{Z. Wahrsch. verw. Gebiete}, \textbf{50}, pp. 27-52.

%\bibitem{Dob}
%Dobrushin, R. L., Major, P. (1979). Non-central limit theorems for non-linear functional of Gaussian fields. Zeitschrift für Wahrscheinlichkeitstheorie und verwandte Gebiete, 50(1), 27-52.

\bibitem{Dudley} R.M.\ Dudley (2003). \textit{Real Analysis and Probability }(2$^{\text{nd}}$ Edition). Cambridge University Press, Cambridge.

\bibitem{GS85}
Giraitis, L. and Surgailis, D. (1985). CLT and other limit theorems for functionals of Gaussian processes. \emph{Z. Wahrsch. Verw. Gebiete.} \textbf{70}, no. 2, 191-212.


\bibitem{Ho}
Ho, H. C. (1995). The law of the iterated logarithm for non-instantaneous filters of strongly dependent Gaussian sequences.
  \newblock \emph{J. Theoret. Probab.}, \textbf{8}, no. 2, 347-360.
 
\bibitem{NuTyndel}
Hu, Y.,  Nualart, D., Tindel, S., Xu, F. (2014). Density convergence in the Breuer-Major theorem for Gaussian stationary sequences. \emph{http://arxiv.org/abs/1403.3413}. 
  

   
\bibitem{LS1978}
Lai, T.L., Stout, W. (1978). The law of the iterated logarithm and upper-lower class tests for partial sums of stationary Gaussian sequences. \emph{Ann. Prob.} \textbf{6}, no. 5, 731-750.   
  
  
 
\bibitem{LS1980}
Lai T.L., Stout, W. (1980). Limit Theorems for Sums of Dependent Random variables. \emph{Z. Wahrsch. Verw. Gebiete.}, \textbf{51}, 1-14.

\bibitem{HSI}
Ledoux M., Nourdin I., Peccati G. (2014). Stein's method, logarithmic Sobolev and transport inequalities. \emph{http://arxiv.org/abs/1403.5855}.


\bibitem{nourdin-book}
Nourdin, I. (2012). \newblock \emph{Selected Aspects of Fractional Brownian Motion}. Bocconi and Springer series.

\bibitem{Moda}
Mori, T., Oodaira, H. (1987). The functional iterated logarithm law for stochastic processes represented by multiple Wiener integrals. \emph{Probab. Theory Related Fields}, \textbf{76}, no. 3, 299-310.

\bibitem{M-S-W}
Nourdin I., Peccati G., R\'{e}veillac A. (2010). Multivariate normal approximation using Stein's method and Malliavin calculus. \emph{Ann. Inst. Poincar\'{e}}. \textbf{46}, no. 1, 45-58.


\bibitem{Entropy}
Nourdin, I., Peccati, G., Swan, Y. (2014). Entropy and the fourth moment phenomenon. \emph{ J. Funct. Anal}. \textbf{266}, no. 5, 3170-3207.



\bibitem{NP09}
Nourdin, I., Peccati, G. (2009). Stein's method on Wiener chaos, \emph{Probab. Theory and Related Fields}. \textbf{145}, no. 1-2, 75-118.



\bibitem{n-pe-book} 
Nourdin, I., Peccati, G. (2012).
\newblock \emph{Normal Approximations Using Malliavin Calculus: from Stein's Method to Universality}. 
\newblock Cambridge Tracts in Mathematics. Cambridge University. 

\bibitem{NUbook}
Nualart, D. (2006). \newblock \emph{The Malliavin calculus and related topics}. Springer.
%
%\bibitem{Picard}
%Picard, J. (2011). Representation formulae for the fractional Brownian motion. In Séminaire de Probabilités XLIII (pp. 3-70). Springer Berlin Heidelberg.

\bibitem{S86}
Stein, C. (1986). \emph{Approximate Computation of Expectations}. Institute of Mathematical Statistics Lecture Notes---Monograph Series, 7. Institute of Mathematical Statistics, Hayward, CA.



\bibitem{Stoutbook}
Stout, W., (1974). \newblock \emph{Almost sure convergence}. Probability and Mathematical Statistics, Vol. 24. Academic Press.

\bibitem{Taqqu} Taqqu, M. (1977). Law of the iterated logarithm for sums of non-linear functions of Gaussian variables that exhibit a long range dependence.\emph{Z. Wahrsch. Verw. Gebiete.}, \textbf{40}.  203-238. 

\bibitem{Taqqu2}
 Taqqu, M., Czado, C. (1985). A survey of functional laws of the iterated logarithm for self-similar processes. \emph{Comm. Statist. Stochastic Models}, \textbf{1}, no. 1, 77-115. 


\bibitem{Taqqu3}
Taqqu, M. S. (1975). Weak convergence to fractional Brownian motion and to the Rosenblatt process. \emph{Z. Wahrsch. Verw. Gebiete.}, \textbf{31}, 287-302.






\end{thebibliography}
\end{document}